\newtheorem{theorem}{Theorem}
\newtheorem{lemma}[theorem]{Lemma}
\newdefinition{definition}[theorem]{Definition}
\newdefinition{remark}[theorem]{Remark}
\newdefinition{example}[theorem]{Example}
\newproof{proof}{Proof}
\newcommand{\complex}{\mathbb{C}}
\def\ps@pprintTitle{%
  \let\@oddhead\@empty
  \let\@evenhead\@empty
  \let\@oddfoot\@empty
  \let\@evenfoot\@oddfoot
}
\begin{document}



\bibliographystyle{plain}

\def\im{\mathop{\rm Im}\nolimits}
\def\diag{\mathop{\rm diag}\nolimits}
\def\re{\mathop{\rm Re}\nolimits}
\def\inv{\mathop{\rm Inv}\nolimits}
\def\ker{\mathop{\rm Ker}\nolimits}
\def\gker{\mathop{\rm GKer}\nolimits}
\def\sp{\mathop{\rm span}\nolimits}
\def\rank{\mathop{\rm rank}\nolimits}
\def\nrank{\mathop{\rm nrank}\nolimits}
\newcommand{\bi}[1]{\hbox{\boldmath{$#1$}}}
\newcommand{\CC}{\mathbb C}

\begin{frontmatter}
\title{On the singular two-parameter eigenvalue problem II\tnoteref{t1}}
\tnotetext[t1]{This work was supported by the Research Agency of the Republic of Slovenia, Research Grant P1-0154.}
\author[1]{Toma\v{z} Ko\v{s}ir}
\ead{tomaz.kosir@fmf.uni-lj.si}
\author[1]{Bor Plestenjak\corref{cor1}}
\ead{bor.plestenjak@fmf.uni-lj.si}
\cortext[cor1]{Corresponding author}
\address[1]{Faculty of Mathematics and Physics, University of Ljubljana,
Jadranska 19, SI-1000 Ljubljana, Slovenia}

\begin{abstract}
In the 1960s, Atkinson introduced an abstract algebraic setting for multiparameter
eigenvalue problems. He showed that a nonsingular multiparameter eigenvalue problem is equivalent
to the associated system of generalized eigenvalue problems, which is a key relation for
many theoretical results and numerical
methods for nonsingular multiparameter eigenvalue problems. In  2009, Muhi\v{c} and Plestenjak extended the above relation to a class of singular two-parameter eigenvalue problems with coprime characteristic polynomials and such that all finite eigenvalues are algebraically simple. They
introduced a way to solve a singular two-parameter
eigenvalue problem by computing the common regular eigenvalues of the associated system of two
singular generalized eigenvalue problems.
Using new tools, in particular the stratification theory, we extend this connection to singular two-parameter eigenvalue problems with possibly multiple eigenvalues and such that characteristic polynomials can have a nontrivial common factor.
\end{abstract}

\begin{keyword}
   singular two-parameter eigenvalue problem \sep
   Kronecker canonical form \sep minimal reducing subspace \sep stratification
   \MSC[2020]{15A18, 15A21, 15A22, 15A69, 65F15}
\end{keyword}

\end{frontmatter}

\section{Introduction}

We consider \emph{the algebraic two-parameter eigenvalue problem}
\begin{equation}\label{problem}
\begin{split}
  W_1(\lambda,\mu)x_1&:=(A_1+\lambda B_1+\mu C_1)x_1=0,\\
  W_2(\lambda,\mu)x_2&:=(A_2+\lambda B_2+\mu C_2)x_2=0,
\end{split}\tag{W}
\end{equation}
where $A_i,B_i$, and $C_i$ are $n_i\times n_i$ matrices over $\complex$,
$\lambda,\mu\in\complex$, and $x_i\in \complex^{n_i}$.
If $(\lambda,\mu)$ and nonzero $x_1,x_2$
satisfy
(\ref{problem}), then the
tensor product $x_1\otimes x_2$ is the corresponding
\emph{(right) eigenvector}. Similarly, $y_1\otimes y_2$ is the
corresponding \emph{left eigenvector} if $y_i\ne 0$ and
$y_i^*W_i(\lambda,\mu)=0$ for $i=1,2$.
In the generic case, problem (\ref{problem}) has $n_1n_2$ \emph{eigenvalues} counting multiplicities $(\lambda,\mu)$ that are roots of the
system of two bivariate characteristic polynomials
\begin{equation}\label{detproblem}
\begin{split}
  p_1(\lambda,\mu)&:=\det(W_1(\lambda,\mu))=0,\\
  p_2(\lambda,\mu)&:=\det(W_2(\lambda,\mu))=0.
\end{split}
\end{equation}

A two-parameter eigenvalue problem (\ref{problem}) is related
to a coupled pair of
generalized eigenvalue problems
\begin{equation}\label{drugi}
\begin{split}
  \Delta_1 z&=\lambda \Delta_0 z,\\
  \Delta_2 z&=\mu \Delta_0 z,
\end{split}\tag{$\Delta$}
\end{equation}
where $n_1n_2\times n_1n_2$ matrices
\begin{equation}\label{Deltaik}
\begin{split}
  \Delta_0&=B_1\otimes C_2-C_1\otimes B_2,\\
  \Delta_1&=C_1\otimes A_2-A_1\otimes C_2,\\
  \Delta_2&=A_1\otimes B_2-B_1\otimes A_2
\end{split}
\end{equation}
are called \emph{operator determinants}
and $z=x_1\otimes x_2$ is a decomposable vector,
for details see, e.g., \cite{Atkinson}.
A generic two-parameter eigenvalue problem
(\ref{problem}) is \emph{nonsingular}, which means that the corresponding
operator determinant $\Delta_0$ is nonsingular. In this case (see,
e.g., \cite{Atkinson}), the matrices $\Delta_0^{-1}\Delta_1$ and $\Delta_0^{-1}\Delta_2$ commute and
the eigenvalues of (\ref{problem}) agree with the eigenvalues of (\ref{drugi}).
This relation enables us to solve a nonsingular two-parameter eigenvalue
problem numerically using standard tools for generalized
eigenvalue problems, see, e.g., \cite{HKP}.

Several applications lead to singular two-parameter eigenvalue
problems, where $\Delta_0$ is singular
and
(\ref{drugi}) is a pair of singular
generalized eigenvalue problems, see, e.g., the computation of the signed distance between two overlapping ellipsoids \cite{Iwata}, solving a system of two bivariate polynomials \cite{BiRoots}, an application in stochastic games \cite{Attia}, aerolastic flutter problem \cite{Pons}, and applications in \cite{MP2}.
 We assume that (\ref{problem}) is singular in the homogeneous setting, i.e., all linear combinations $\alpha_0 \Delta_0 +\alpha_1 \Delta_1+\alpha_2 \Delta_2$ are singular, as
otherwise the problems can be transformed into a nonsingular one by a linear substitution of parameters $\lambda$ and $\mu$.
In the singular case, the relation between problems (\ref{problem}) and (\ref{drugi}) is much less understood.
For a start, we need to define what is an eigenvalue
of (\ref{problem}) and (\ref{drugi}) in the singular case.

We assume that bivariate pencils $W_1(\lambda,\mu)$ and $W_2(\lambda,\mu)$
are both regular, i.e., $p_i(\lambda,\mu)\not\equiv 0$ for $i=1,2$.
The problem (\ref{problem}) is then singular if either
\begin{enumerate}
\item [a)] degree of $p_i$ is less than $n_i$ for $i=1,2$, or
\item [b)] polynomials $p_1$ and $p_2$ have a nontrivial common factor.
\end{enumerate}

\begin{definition}\label{def:eigW} {\rm
Let $(\lambda_0,\mu_0)\in\complex^2$ be a point that does not belong to
a nontrivial common factor of characteristic
polynomials $p_1$ and $p_2$. We say that $(\lambda_0,\mu_0)$ is a \emph{finite eigenvalue}
of (\ref{problem}) if there exist nonzero vectors $x_1$ and $x_2$ that
satisfy (\ref{problem}).
}
\end{definition}

It is easy to see that all isolated solutions of
the system of characteristic polynomials (\ref{detproblem}) are
finite eigenvalues of (\ref{problem}).

A definition of eigenvalue that covers both regular and singular pencils is that
 $\lambda_0\in\CC$ is an \emph{eigenvalue} of a matrix pencil $A-\lambda B$
if $\rank(A-\lambda_0 B)<\nrank(A-\lambda B)$, where
  $\nrank(A-\lambda B) := \max_{\xi \in \mathbb{C}} \rank(A -\xi B)$
  is the \emph{normal rank} of a matrix pencil.
In a similar way, $\infty$ is an eigenvalue if
  $\rank (B) < \nrank(A-\lambda B)$.

We define a common finite eigenvalue of a pair of matrix
pencils (\ref{drugi}) as follows.

\begin{definition}\label{def:eigDelta} {\rm
  A point $(\lambda_0, \mu_0)\in\complex^2$ is a \emph{(finite) eigenvalue}
  of  matrix pencils $\Delta_1 -\lambda \Delta_0$ and $\Delta_2-\mu \Delta_0$
  if the following is true:
  \begin{enumerate}
    \item[a)] $\rank(\Delta_1-\lambda_0 \Delta_0)<
      \nrank(\Delta_1-\lambda \Delta_0)$,
    \item[b)] $\rank(\Delta_2-\mu_0 \Delta_0)<
      \nrank(\Delta_2-\mu \Delta_0)$,
    \item[c)] there exists a common regular eigenvector
      $z$, i.e., a nonzero $z$ such that
      $(\Delta_1 -\lambda_0 \Delta_0) z  =  0$,
      $(\Delta_2-\mu_0 \Delta_0) z  =  0$,
      and $z\not\in {\cal R}(\Delta_i,\Delta_0)$ for $i=1,2$,
  where ${\cal R}(\Delta_i,\Delta_0)$ is the minimal reducing subspace (for a definition, see Section \ref{sec:kron}) for $\Delta_i$ and $\Delta_0$.
  \end{enumerate}
  }
\end{definition}

Under the assumption that $p_1$ and $p_2$ are coprime, i.e., they do not have a nontrivial
common factor, it is shown in \cite{MP2} that if all eigenvalues of (\ref{problem}) are
algebraically simple, then eigenvalues of (\ref{problem}) and (\ref{drugi}) agree.
Also, if $(\lambda_0, \mu_0)\in\complex^2$ is an algebraically simple eigenvalue of (\ref{problem}), then $(\lambda_0, \mu_0)$ is an eigenvalue of (\ref{drugi}).
Several numerical examples in \cite{MP2} indicate that the relation between the eigenvalues of (\ref{problem}) and (\ref{drugi}) could be extended to multiple eigenvalues
and also to the case where $p_1$ and $p_2$ have a nontrivial
common factor. In this paper we confirm these findings by providing the supporting theory.

In Section \ref{sec:kron} we introduce the Kronecker canonical form and other
tools. We introduce stratification together with some new related results in Section \ref{sec::strat}. In Section \ref{sec::eig} we show that if
$p_1$ and $p_2$ are coprime, then each
eigenvalue of (\ref{problem}) is an eigenvalue of (\ref{drugi}) and
vice versa. For problems
such that $p_1$ and $p_2$ do have a nontrivial common factor
 we extend the relation to
points that do not belong to a common factor. In
Section \ref{sec::common} we discuss conditions for a point
$(\lambda_0,\mu_0)$ on a common curve that would make it an eigenvalue and give
several illuminating numerical examples.

\section{Auxiliary results}\label{sec:kron}
 The Kronecker canonical form (KCF) of a matrix pencil is one of
 the main tools in this paper. For more details on the KCF and its numerical
computation see, e.g., \cite{guptri1}, \cite{guptri2},
\cite{Gantmacher}, and \cite{Van_Dooren_singular}.

\begin{definition} { \rm
  Let $A- \lambda B \in \mathbb{C}^{m \times n}$ be a matrix pencil.
  Then there exist nonsingular matrices $P \in \mathbb{C}^{m \times m}$ and
  $Q \in \mathbb{C}^{n\times n}$ such that
  \begin{displaymath}
    P^{-1} (A-\lambda B)Q = \widetilde{A}-\lambda \widetilde{B}
    = \diag(A_1 - \lambda B_1, \ldots, A_k - \lambda B_k)
  \end{displaymath}
  is in \emph{Kronecker canonical form}. Each block $A_i-\lambda B_i$,
  $i=1,\ldots,k$, has one of the following forms:
  $J_d(\alpha), N_d, L_d$, or $L_d^T$, where the matrices
  \begin{displaymath}
    J_d (\alpha) = \left[\begin{matrix}
      \alpha -\lambda & 1 & & \cr & \ddots & \ddots & \cr & & \ddots & 1\cr
      & & & \alpha -  \lambda\end{matrix}\right]\in\mathbb{C}^{d\times d}, \quad
    N_d  = \left[\begin{matrix}1 & -\lambda  & & \cr & \ddots & \ddots & \cr
      & & \ddots & -\lambda\cr & & & 1\end{matrix}\right]\in\mathbb{C}^{d\times d},
  \end{displaymath}
  \begin{displaymath}
    L_d = \left[\begin{matrix} -\lambda & 1 &  &\cr
      & \ddots & \ddots & \cr & & -\lambda & 1\end{matrix}\right]\in\mathbb{C}^{d\times (d+1)}, \quad
    L_d^T = \left[\begin{matrix} -\lambda & &\cr 1 &\ddots & \cr
      & \ddots & -\lambda\cr & & 1\end{matrix}\right]\in\mathbb{C}^{(d+1)\times d},
  \end{displaymath}
  represent a finite regular block, an infinite regular block, a right
  singular block, and a left singular block, respectively.
To each Kronecker block
we associate a \emph{Kronecker chain} of linearly independent vectors as
follows:

\begin{enumerate}
  \item[a)] A finite regular block $J_d(\alpha)$ is associated with
    vectors $u_1,\ldots,u_d$ that satisfy
    \begin{equation*}
    \begin{split}
       (A-\alpha B)u_1&=0,\\
       (A-\alpha B)u_{i+1}&=Bu_i,\quad i=1,\ldots,d-1.
    \end{split}
    \end{equation*}
  \item[b)] An infinite regular block $N_d$ is associated with
    vectors $u_1,\ldots,u_d$ that satisfy
    \begin{equation*}
    \begin{split}
       Bu_1&=0,\\
       Bu_{i+1}&=Au_i,\quad i=1,\ldots,d-1.
    \end{split}
    \end{equation*}
  \item[c)] A right singular block $L_d$ is associated with
     vectors $u_1,\ldots,u_{d+1}$ that satisfy
    \begin{equation*}
    \begin{split}
       Bu_1&=0,\\
       Bu_{i+1}&=Au_i,\quad i=1,\ldots,d,\\
       0&=Au_{d+1}.
    \end{split}
    \end{equation*}
  \item[d)] For $d\ge 1$, a left singular block $L_d^T$ is associated with
    vectors $u_1,\ldots,u_{d}$ that satisfy
    \begin{equation*}
    \begin{split}
      Bu_i&=Au_{i+1},\quad i=1,\ldots,d-1.
    \end{split}
    \end{equation*}
\end{enumerate} }
\end{definition}

The union of Kronecker chains for all Kronecker blocks
is a basis for $\complex^n$.

One can see that
$L_d \left[\begin{matrix}1 & \lambda & \cdots & \lambda^d\end{matrix}\right]^T=0$
and to each right singular block there corresponds one of linearly independent vectors from the kernel of $A-\lambda B$. If vectors $u_1,\ldots,u_{d+1}$ form a  Kronecker chain for a right singular block $L_d$, then
$$(A-\lambda B)(\lambda^d u_1 + \lambda^{d-1} u_2 +\cdots +\lambda u_d + u_{d+1})=0$$ for all $\lambda$. Thus, to each right singular block $L_d$ there corresponds a polynomial vector $p(\lambda)$ of degree $d$ such that
$(A-\lambda B)p(\lambda)=0$ for all $\lambda$.

We know that $\nrank(A-\lambda B)=n-s$, where $s\ge 0$ is the number of the right singular blocks
$L_d$ in the KCF of $A-\lambda B$. If $A-\lambda B$ is a singular
pencil then $s\ge 1$ and there exist $s$ linearly independent polynomial vectors
$p_1(\lambda),\ldots,p_s(\lambda)$, each corresponding to a different right singular block, such that for a generic $\lambda_0\in\CC$ vectors
$p_1(\lambda_0),\ldots,p_s(\lambda_0)$ form a basis for $\ker(A-\lambda_0 B)$.
We say that $\sp(p_1(\lambda),\ldots,p_s(\lambda))$ is the \emph{generic kernel} of $A-\lambda B$ and denote it by $\gker(A-\lambda B)$. For each $\lambda\in\CC$, $\dim(\gker(A-\lambda B))=s$ and $\gker(A-\lambda B)\subset \ker(A-\lambda B)$.

A subspace
${\cal M}\subset\complex^n$ is a \emph{reducing subspace} for
$A-\lambda B$ if $\dim(A{\cal M}+B{\cal M})=\dim({\cal M})-s$. The vectors from
the Kronecker chains of all right singular blocks $L_d$
form a basis for the \emph{minimal reducing subspace}
${\cal R}(A,B)$, which is a subset of all reducing subspaces.
${\cal R}(A,B)$ is unique and can be computed 
from the generalized upper triangular form (GUPTRI), see, e.g.,
\cite{guptri1,guptri2,MCS}. One can also compute ${\cal R}(A,B)$ numerically by
exploiting the relation
$${\cal R}(A,B)=\bigcup_{\xi \in\CC}\gker(A-\xi B).$$

If $\lambda_0\in\CC$ is an eigenvalue of a singular matrix pencil $A-\lambda B$, then the eigenvector cannot be defined in a usual way, but the following is true.

\begin{lemma} For a (singular) matrix pencil $A- \lambda B$ the following is equivalent:
\begin{enumerate}
\item[a)] $\lambda_0\in\CC$ is an eigenvalue of $A-\lambda B$,
\item[b)] $\rank(A-\lambda_0 B)<\nrank(A-\lambda B)$,
\item[c)] $\dim(\gker(A-\lambda_0 B))<\dim(\ker(A-\lambda_0 B))$,
\item[d)] there exists a nonzero $v$ such that $v\not\in\gker(A-\lambda_0 B)$ and
$(A-\lambda_0 B)v=0$,
\item[e)] there exists a nonzero $v$ such that $v\not\in {\cal R}(A,B)$ and
$(A-\lambda_0 B)v=0$.
\end{enumerate}
\end{lemma}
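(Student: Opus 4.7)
The plan is to prove the chain of equivalences (a) $\Leftrightarrow$ (b) $\Leftrightarrow$ (c) $\Leftrightarrow$ (d) $\Leftrightarrow$ (e), since most of the work reduces to bookkeeping with the Kronecker canonical form.

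First, (a) $\Leftrightarrow$ (b) is just the definition of an eigenvalue of a (possibly singular) pencil given earlier in the paper. For (b) $\Leftrightarrow$ (c), I would use the rank--nullity identity $\dim(\ker(A-\lambda_0 B)) = n-\rank(A-\lambda_0 B)$ together with the two facts recalled right before the lemma: $\nrank(A-\lambda B)=n-s$ where $s$ is the number of right singular blocks in the KCF, and $\dim(\gker(A-\lambda B))=s$ for every $\lambda\in\CC$. Substituting, the strict inequality $\rank(A-\lambda_0 B)<\nrank(A-\lambda B)$ becomes $\dim(\ker(A-\lambda_0 B))>s=\dim(\gker(A-\lambda_0 B))$, which is (c).

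Next, (c) $\Leftrightarrow$ (d) follows from $\gker(A-\lambda_0 B)\subset\ker(A-\lambda_0 B)$: the inclusion is strict exactly when some nonzero $v\in\ker(A-\lambda_0 B)$ fails to lie in $\gker(A-\lambda_0 B)$.

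The main point, and the one place where the KCF really has to be used, is (d) $\Leftrightarrow$ (e). The inclusion $\gker(A-\lambda_0 B)\subset\mathcal{R}(A,B)$ is immediate from the characterization $\mathcal{R}(A,B)=\bigcup_{\xi\in\CC}\gker(A-\xi B)$, so (e) implies (d) at once. For the converse I would establish the sharper identity
\begin{equation*}
\ker(A-\lambda_0 B)\cap \mathcal{R}(A,B)=\gker(A-\lambda_0 B).
\end{equation*}
Passing to KCF coordinates, $\mathcal{R}(A,B)$ decomposes as the direct sum of the subspaces spanned by the Kronecker chains of the right singular blocks $L_{d}$. Restricted to the chain $u_1,\dots,u_{d+1}$ of one such block, the kernel of $A-\lambda_0 B$ is exactly the one-dimensional span of $\lambda_0^{d}u_1+\lambda_0^{d-1}u_2+\cdots+u_{d+1}$, which is precisely the contribution of that block to $\gker(A-\lambda_0 B)$. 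Summing over all right singular blocks yields the displayed identity, and hence any $v\notin\gker(A-\lambda_0 B)$ lying in $\ker(A-\lambda_0 B)$ must lie outside $\mathcal{R}(A,B)$, giving (d) $\Rightarrow$ (e).

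The only step with any real content is the block-by-block computation in (d) $\Leftrightarrow$ (e); the remaining implications are dimension counts once one accepts the two KCF facts $\nrank=n-s$ and $\dim(\gker)=s$ recalled above.
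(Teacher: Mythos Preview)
The paper states this lemma without proof, treating it as a routine consequence of the Kronecker canonical form facts recalled just before it. Your argument is correct and is exactly the kind of verification one would supply: (a)$\Leftrightarrow$(b) is the definition, (b)$\Leftrightarrow$(c) is rank--nullity combined with $\nrank=n-s$ and $\dim(\gker)=s$, (c)$\Leftrightarrow$(d) is set inclusion, and the only substantive step is the identity $\ker(A-\lambda_0 B)\cap\mathcal{R}(A,B)=\gker(A-\lambda_0 B)$, which you obtain by the block-diagonal KCF computation (each $L_d$ block has full row rank $d$ at every $\lambda_0$, so contributes a one-dimensional kernel equal to its $\gker$ contribution). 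Nothing is missing.
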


If $\lambda_0$ is an eigenvalue of $A-\lambda B$, then we say that each
nonzero vector $v$ such that $v\not\in{\cal R}(A,B)$ and
$(A-\lambda_0 B)v=0$ is a corresponding \emph{(right) regular eigenvector}.
Clearly, if we pick a random vector $v$ from $\ker(A-\lambda_0 B)$, then $v$ is a regular eigenvector with probability one.

For the study of (\ref{drugi}) we will apply the following result
from \cite{KosirKron} that shows that
the kernel of
an operator determinant $\Delta=A\otimes D - B\otimes C$ can be
constructed from the Kronecker chains of matrix
pencils $A-\lambda B$ and $C-\mu D$.

\begin{theorem}[{\cite[Theorem 4]{KosirKron}}]
  \label{thm:Tomaz}
  A basis for the kernel of $\Delta=A\otimes D - B\otimes C$
  is the union of sets of linearly independent
  vectors associated with the
  pairs of Kronecker blocks of the following types:
  \begin{enumerate}
    \item[a)] $(J_{d_1}(\alpha_1),\; J_{d_2}(\alpha_2))$, where
       $\alpha_1=\alpha_2$,
    \item[b)] $(N_{d_1},\; N_{d_2})$,
    \item[c)] $(N_{d_1},\; L_{d_2})$,
    \item[d)] $(L_{d_1},\; N_{d_2})$,
    \item[e)] $(L_{d_1},\; J_{d_2}(\alpha))$,
    \item[f)] $(J_{d_1}(\alpha),\; L_{d_2})$,
    \item[g)] $(L_{d_1},\; L_{d_2})$,
    \item[h)] $(L_{d_1},\; L_{d_2}^T)$, where $d_1<d_2$,
    \item[i)] $(L_{d_1}^T,\; L_{d_2})$, where $d_1>d_2$,
  \end{enumerate}
  where the left block of each pair belongs to the
  pencil $A-\lambda B$ and the right block belongs to the
  pencil $C-\mu D$.

  For each pair of Kronecker blocks that satisfies
  a) or b) we can construct an associated
  set of linearly independent
  vectors $z_1,\ldots,z_d$ in the kernel of $\Delta$
  as follows.
    Let the vectors $u_1,\ldots,u_{d_1}$ form
  a Kronecker chain associated with the
  block $J_{d_1}(\alpha_1)$ (or $N_{d_1}$)
   of the pencil $A-\lambda B$ and
  let the vectors $v_1,\ldots,v_{d_2}$ form
  a Kronecker chain associated with the
  block $J_{d_2}(\alpha_2)$ (or $N_{d_2}$) of the pencil $C-\mu D$.
  Then $d=\min(d_1,d_2)$ and
  $$z_j=\sum_{i=1}^ju_i\otimes v_{j+1-i},\quad j=1,\ldots,d.$$
\end{theorem}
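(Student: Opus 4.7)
My plan is to prove the theorem by reducing both pencils to Kronecker canonical form and then computing the kernel of the operator determinant block by block.

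First, choose invertible matrices $P_1,Q_1,P_2,Q_2$ so that $P_1^{-1}(A-\lambda B)Q_1=\tilde A-\lambda\tilde B=\diag(A_1-\lambda B_1,\ldots,A_k-\lambda B_k)$ and $P_2^{-1}(C-\mu D)Q_2=\tilde C-\mu\tilde D=\diag(C_1-\mu D_1,\ldots,C_\ell-\mu D_\ell)$ are the respective KCFs. A direct tensor-product identity gives $\Delta=(P_1\otimes P_2)\tilde\Delta(Q_1^{-1}\otimes Q_2^{-1})$ with $\tilde\Delta=\tilde A\otimes\tilde D-\tilde B\otimes\tilde C$, so $\ker\Delta=(Q_1\otimes Q_2)\ker\tilde\Delta$ and it suffices to produce a basis of $\ker\tilde\Delta$. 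Because all four matrices are block-diagonal, the ambient space decomposes as $\complex^{n_1}\otimes\complex^{n_2}=\bigoplus_{i,j}V_i\otimes W_j$, where $V_i$ and $W_j$ are the coordinate subspaces associated with the $i$th block of the first pencil and the $j$th block of the second. The map $\tilde\Delta$ sends each $V_i\otimes W_j$ into the corresponding tensor summand of the codomain, so $\ker\tilde\Delta=\bigoplus_{i,j}\ker\Delta_{ij}$ with $\Delta_{ij}:=A_i\otimes D_j-B_i\otimes C_j$, and the problem splits into sixteen canonical block pairings.

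I would then dispatch these sixteen cases and confirm that exactly the nine listed in (a)--(i) contribute. For a same-type regular pair $(J_{d_1}(\alpha_1),J_{d_2}(\alpha_2))$ one has $\Delta_{ij}=(\alpha_1-\alpha_2)I+S_1\otimes I-I\otimes S_2$ with commuting nilpotents $S_1,S_2$, which is invertible unless $\alpha_1=\alpha_2$; in the latter case the chain relations $(A_i-\alpha B_i)u_{p+1}=B_iu_p$ and $(C_j-\alpha D_j)v_{q+1}=D_jv_q$ make $z_r=\sum_{p=1}^{r}u_p\otimes v_{r+1-p}$ satisfy $\Delta_{ij}z_r=0$ by telescoping, and linear independence is immediate, yielding $\min(d_1,d_2)$ kernel vectors. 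The pair $(N_{d_1},N_{d_2})$ is formally identical. Opposite-type regular pairs $(J,N)$ and $(N,J)$ produce $\Delta_{ij}$ of the form $cI+(\text{nilpotent})$ with $c\ne 0$, hence trivial kernel. Cases (c)--(g) involving at least one right singular block $L_d$ are treated using the polynomial kernel vector $p(\lambda)=\sum_{i=1}^{d+1}\lambda^{d+1-i}u_i$ recalled before the theorem: evaluating $p$ at the appropriate finite or infinite eigenvalue of the regular partner (or tensoring two such polynomial vectors in case (g)) gives explicit kernel vectors of $\Delta_{ij}$.

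The principal obstacle is cases (h), (i) pairing an $L$ with an $L^T$, and the need to rule out $(L_{d_1}^T,L_{d_2}^T)$. For these I would write $\Delta_{ij}$ in the standard basis $e_p\otimes e_q$, which converts the kernel equation into a two-index linear recurrence with only shift-and-scale terms. Nontrivial solutions exist precisely when the extra column afforded by the right singular block can absorb the structural deficit imposed by the transposed block; this yields the claimed strict inequalities $d_1<d_2$ in (h) and $d_1>d_2$ in (i), while a pair of $L^T$ blocks produces an overdetermined recurrence admitting only the zero solution, so $(L^T,L^T)$ contributes nothing. Once all sixteen cases are settled, collecting the explicit bases and pushing them through $Q_1\otimes Q_2$ produces the desired basis of $\ker\Delta$.
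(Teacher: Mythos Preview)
The paper does not prove this theorem: it is quoted from \cite{KosirKron} (Theorem~4 there) and used as a black box, with the remark that the explicit constructions for cases (c)--(i) are omitted as ``irrelevant to our case.'' There is therefore no proof in the present paper to compare your proposal against.

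That said, your outline is the standard route and is essentially how the result is established in \cite{KosirKron}: reduce both pencils to Kronecker canonical form, observe that $\Delta$ is equivalent via $P_1\otimes P_2$ and $Q_1\otimes Q_2$ to a block-diagonal operator, and analyse each of the sixteen block-pair types separately. Your treatment of the purely regular pairings is correct. Two points need more care before this becomes a proof. First, for $(J_{d_1}(\alpha),J_{d_2}(\alpha))$ you exhibit $\min(d_1,d_2)$ kernel vectors by the telescoping identity, but the theorem asserts a \emph{basis}, so you must also show $\dim\ker\Delta_{ij}\le\min(d_1,d_2)$; equivalently, that $S_1\otimes I-I\otimes S_2$ has rank $d_1d_2-\min(d_1,d_2)$. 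The same upper-bound step is missing in (c)--(g): producing some kernel vectors from the polynomial null-vector of $L_d$ does not by itself determine the full kernel of the rectangular $\Delta_{ij}$. Second, besides $(L^T,L^T)$ you must explicitly dispose of the four mixed tall cases $(J,L^T)$, $(L^T,J)$, $(N,L^T)$, $(L^T,N)$, where $\Delta_{ij}$ has more rows than columns and one has to verify full column rank; your recurrence idea handles these too, but they should be listed and checked. None of this is deep, but it is where the actual bookkeeping lies.
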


In the above theorem
we omit the constructions of vectors in the kernel
for all pairs of Kronecker blocks that are irrelevant to our case.
For a complete description, see \cite{KosirKron}.
A similar technique is used to
describe the kernels of generalized Sylvester operators
in \cite{Demmel_Edelman}.

\begin{definition}For a pair of matrix pencils $A-\lambda B$ and $C-\mu D$ we  denote by $T(A-\lambda B,C-\mu D)$ the size of the kernel of
$A\otimes D-B\otimes C$.
\end{definition}

\section{Stratification}\label{sec::strat}

If we perturb a regular matrix pencil $A-\lambda B$ into $\widetilde A-\lambda
\widetilde B$, where $\widetilde A$ and $\widetilde B$ are arbitrarily close
to $A$ and $B$, respectively, then the stratification theory \cite{Edelman2, MCS} explains which are possible structures
of the KCF of the perturbed pencil, based on the KCF of the initial pencil.

Let $\alpha_1,\ldots,\alpha_k$ be different eigenvalues (including a possible  eigenvalue $\infty$) of $A-\lambda B$. For an eigenvalue $\alpha_j$ let $d_{j1}\ge d_{j2}\ge \cdots \ge d_{jm_j}$ be sizes of the corresponding Jordan blocks (or infinite blocks for eigenvalue $\infty$) for $j=1,\ldots,k$.

The KCF of $\widetilde A-\lambda \widetilde B$ can be obtained from the KCF of the
initial pencil $A-\lambda B$ using a finite sequence of the following two   moves:
\begin{itemize}
\item \emph{MLW (minimum leftward move)}: For one eigenvalue $\alpha$ we get from blocks of sizes $d_{1}\ge d_{2}\ge \cdots \ge d_{m}$ blocks of sizes
    $d'_{1}\ge d'_{2}\ge \cdots \ge d'_{m}$, where
    $d'_p=d_p+1$ and $d'_q=d_q-1$ for $p<q$ and
    $d_p=d_{p+1}=\cdots =d_{q-1}\ge d_q$, for an eigenvalue $\alpha'$ close to $\alpha$ (it can remain unchanged). The number of blocks corresponding to $\alpha'$ equals the number of blocks for $\alpha$, except when $q=m$ and
    $d_q=1$ in which case the perturbed pencil has $m-1$ blocks.
\item \emph{HC (horizontal cut)}: From one eigenvalue $\alpha$ and its blocks of sizes $d_{1}\ge d_{2}\ge \cdots \ge d_{m}$ we get two eigenvalues $\beta\ne\gamma$, both close to $\alpha$ (one of them can remain $\alpha$), such that the sizes of blocks for $\beta$
    are $b_{1}\ge b_{2}\ge \cdots \ge b_{m}$, where $b_j=\min(d_j,C)$, and the
    sizes of blocks for $\gamma$ are $c_{1}\ge c_{2}\ge \cdots \ge c_{m}$, where
    $c_j=d_j-b_j$ for $j=1,\ldots,m$ and $1\le C<d_1$ is the size of the cut. If $c_j=0$ for some $j\in\{1,\ldots,m\}$, then there are less than $m$ blocks for the eigenvalue $\gamma$.
\end{itemize}
In the above two moves, whenever $\alpha=\infty$, it can perturb into finite eigenvalues, while a finite eigenvalue cannot perturb into an infinite one. In addition to the above two moves, each eigenvalue can perturb into a nearby one without any change in the structure of Jordan (or infinite) blocks.

We say that matrix pencils $A-\lambda B$ and $C-\lambda D$ are equivalent if there exist nonsingular matrices $P$ and $Q$ such that
$C-\lambda D=P(A-\lambda B)Q$. The \emph{orbit} of a matrix pencil is the manifold of all equivalent pencils, which means that all pencils in the orbit have equal KCF. A \emph{bundle} is the union of all orbits with the same canonical structure (number of blocks and their sizes) but with the eigenvalues
unspecified. We denote the bundle that corresponds to the canonical structure of
the pencil $A-\lambda B$ by ${\cal B}(A-\lambda B)$. We say that bundle ${\cal B'}$ \emph{covers} bundle ${\cal B}$ if we get
${\cal B'}$ by applying a MLW or HC move on ${\cal B}$. More precisely, we say that
${\cal B'}$ is a \emph{MLW cover} of ${\cal B}$ or a \emph{HC cover} of ${\cal B}$.

In the following section we will study the relation between the eigenvalues of (\ref{problem}) and (\ref{drugi}). One condition for $(\lambda_0,\mu_0)$ to be an eigenvalue of (\ref{drugi}) is that $\rank(\Delta_1-\lambda_0\Delta_0)<\rank(\Delta_1-\xi \Delta_0)$ for a generic $\xi\ne \lambda_0$. It follows from
$$\Delta_1-\lambda \Delta_0 = W_1(\lambda,0)\otimes C_2 - C_1\otimes W_2(\lambda,0)$$
that a perturbation from $\lambda=\lambda_0$ to $\lambda=\xi$ changes the structure
of the KCF in Theorem \ref{thm:Tomaz}. To understand how
 this affects $\rank(\Delta_1-\lambda \Delta_0)$ it is enough to study what happens when we apply a MLW or HC move to the corresponding bundles. The following two lemmas provide the appropriate theory.

\begin{lemma}\label{lem:MLW}
Let $A-\lambda B$ and $C-\mu D$ be a pair of regular matrix pencils and let
$\widetilde A-\lambda \widetilde B$ and $\widetilde C-\mu \widetilde D$ be
their arbitrarily close perturbations, such that ${\cal B}(\widetilde A-\lambda \widetilde B)$ is a MLW
cover of ${\cal B}(A-\lambda B)$ and
${\cal B}(\widetilde C-\mu \widetilde D)={\cal B}(C-\mu D)$.
More precisely, let the MLW move be applied to an eigenvalue $\alpha$ of  $A-\lambda B$ and let
$d_1\ge d_2\ge \cdots \ge d_m$ be sizes of the corresponding blocks in the
KCF. If the MLW move affects blocks with indices $p$ and $q$, where $p<q$ and
    $d_p=d_{p+1}=\cdots =d_{q-1}\ge d_q$, then
\begin{enumerate}
\item $\widetilde T\le T$, where $T={\rm dim}(\ker(A\otimes D-B\otimes C))$ and $\widetilde T={\rm dim}(\ker(\widetilde A\otimes \widetilde D-\widetilde B\otimes \widetilde C))$.
\item If the KCF of $C-\mu D$ contains a block $J_e(\alpha)$ such that $d_p\ge e\ge d_q$, then $\widetilde T<T$,
\end{enumerate}
\end{lemma}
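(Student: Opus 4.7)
The strategy is to combine the explicit combinatorial formula for $T$ supplied by Theorem~\ref{thm:Tomaz} with lower semicontinuity of the rank. Since $A-\lambda B$ and $C-\mu D$ are both regular, only pair types (a) and (b) of Theorem~\ref{thm:Tomaz} can occur, so
$$
T=\sum_{\beta}\,\sum_{i,k}\min(a_i^{\beta},c_k^{\beta}),
$$
where $\beta$ ranges over the common (finite or infinite) eigenvalues of the two pencils and $a_i^{\beta}$, $c_k^{\beta}$ are the sizes of the Jordan or infinite blocks at $\beta$ in $A-\lambda B$ and $C-\mu D$, respectively. Part~(1) is then immediate: the set $\{M:\rank M \ge r\}$ is open, so for the perturbation taken close enough, $\rank(\widetilde A\otimes\widetilde D-\widetilde B\otimes\widetilde C)\ge \rank \Delta$ and hence $\widetilde T\le T$.

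For Part~(2) I would isolate the change at the eigenvalue $\alpha$ and split into two subcases. If $\alpha'=\alpha$, only the $\alpha$-summand of the formula above is affected, and
$$
\widetilde T_{\alpha}-T_{\alpha}=\sum_{k}\bigl[\min(d_p+1,e_k)-\min(d_p,e_k)+\min(d_q-1,e_k)-\min(d_q,e_k)\bigr],
$$
where the $e_k$ are the sizes of the blocks $J_{e_k}(\alpha)$ appearing in $C-\mu D$. An elementary case check in $e_k$ shows each bracket equals $-1$ when $d_q\le e_k\le d_p$ and $0$ otherwise, so the block $J_e(\alpha)$ postulated in the hypothesis forces a strict drop of $\widetilde T_\alpha$ by at least one. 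If instead $\alpha'\ne\alpha$, then for a generic perturbation within the specified bundles $\alpha'$ differs from every (perturbed) eigenvalue of $\widetilde C-\mu\widetilde D$; the blocks displaced off $\alpha$ lose all their partners on the $\widetilde C$-side and the whole contribution $T_\alpha>0$ vanishes from $\widetilde T$. Either way the $\alpha$-contribution strictly drops, and combining this with Part~(1) applied to the remaining eigenvalues yields $\widetilde T<T$.

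The main obstacle is the bookkeeping in the $\alpha'\ne\alpha$ subcase: one must rule out the possibility that the blocks moved to $\alpha'$ accidentally match a perturbed eigenvalue of $\widetilde C-\mu\widetilde D$ and so restore a compensating pairing. This is a codimension-one condition on the perturbation parameters, automatic for the generic perturbation permitted by the ``arbitrarily close'' clause in the hypothesis; the $\alpha'=\alpha$ subcase is by contrast a purely combinatorial check.
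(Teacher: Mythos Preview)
Your argument is correct and its core combinatorial step coincides with the paper's, but the two proofs are organized differently.

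For Part~(1) you invoke lower semicontinuity of the rank of $A\otimes D-B\otimes C$ under perturbation of the data; this is cleaner and more direct than what the paper does. The paper instead argues combinatorially from the start: it assumes the eigenvalues of $\widetilde A-\lambda\widetilde B$ and $\widetilde C-\mu\widetilde D$ are matched so as to maximise $\widetilde T$, and then computes
\[
T-\widetilde T=\sum_{j}\bigl[\min(d_p,e_j)-\min(d_p+1,e_j)\bigr]+\sum_{j}\bigl[\min(d_q,e_j)-\min(d_q-1,e_j)\bigr]\ge 0
\]
directly. Your semicontinuity shortcut avoids this bookkeeping entirely for Part~(1).

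For Part~(2) both proofs arrive at the same termwise identity, namely that the bracketed expression equals $-1$ exactly when $d_q\le e_k\le d_p$ and $0$ otherwise. The difference is in the framing. The paper's worst-case convention (always assume $\alpha'$ is matched in $\widetilde C-\mu\widetilde D$) eliminates your case split at the outset: the $\alpha'\ne\alpha$ situation you treat separately is absorbed into the single combinatorial computation, since in the worst case the perturbed eigenvalues track each other. Your genericity argument in that subcase is not needed and, as you yourself flag, is the weakest link; the lemma is stated for an arbitrary perturbation in the prescribed bundles, not a generic one, so dismissing the accidental-match scenario as ``codimension one'' is not quite enough. The fix is simply to observe that if $\alpha'$ does happen to match the perturbed $\alpha$-eigenvalue of $\widetilde C-\mu\widetilde D$, you are back in your first subcase (with $\alpha$ relabelled), and the combinatorial count applies verbatim.

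One further imprecision: ``Part~(1) applied to the remaining eigenvalues'' does not literally make sense, since your Part~(1) is a global rank statement. What you need is the easy observation that for $\beta\ne\alpha$ the $\beta$-blocks of $\widetilde A-\lambda\widetilde B$ are unchanged and the bundle of $\widetilde C-\mu\widetilde D$ is fixed, so the $\beta$-contribution to $\widetilde T$ cannot exceed $T_\beta$; this is immediate from the $\min$-formula and requires no semicontinuity. The paper's worst-case set-up makes this automatic, which is the main thing its framing buys.
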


\begin{proof}
A result of a generic perturbation would be that
$\widetilde A-\lambda \widetilde B$ and $\widetilde C-\mu \widetilde D$ do not have
any common eigenvalues and thus $\widetilde T=0$. In our analysis we therefore assume that the eigenvalues of perturbed pencils $\widetilde A-\lambda \widetilde B$ and $\widetilde C-\mu \widetilde D$ match in a way that gives the maximum possible $\widetilde T$.

Let $e_1\ge e_2\ge \cdots \ge e_\ell$
be the sizes of blocks corresponding to $\alpha$ in the
KCF of the matrix pencil $C-\mu D$. From Theorem \ref{thm:Tomaz} it follows that
these blocks contribute
\begin{equation}\label{T_alpha}
T(\alpha)=\sum_{i=1}^m\sum_{j=1}^\ell \min(d_i,e_j)
\end{equation}
linearly independent vectors to $\ker(A\otimes D-B\otimes C)$. If
$\alpha$ perturbs into $\alpha'$ in the MLW move, then we assume that $\alpha'$ is also an eigenvalue of $\widetilde C-\mu \widetilde D$ and that
the KCF of $\widetilde C-\mu \widetilde D$ contains blocks
$J_{e_q}(\alpha')$ for $q=1,\ldots,\ell$.

What changes in the MLW move are sizes of the blocks. After the MLW move the
blocks that correspond to $\alpha'$ in $\widetilde A-\lambda \widetilde B$ have sizes
    $d'_{1}\ge d'_{2}\ge \cdots \ge d'_{m}$, where
    $d'_p=d_p+1$, $d'_q=d_q-1$ and $d'_s=d_s$ for $s\ne p,q$. It follows that
$$T-\widetilde T=\sum_{j=1}^\ell (\min(d_p,e_j)-\min(d_p+1,e_j))
+\sum_{j=1}^\ell (\min(d_q,e_j)-\min(d_q-1,e_j)).$$
The difference in the first sum is 1 for indices where $d_p< e_j$ and
the difference in the second sum is -1 for indices where $e_j\ge d_q$.
Since $d_p\ge d_q$, we clearly have $\widetilde T\le T$, and  if there
exists $e_j$ such that $d_p\ge e_j\ge d_q$, then $\widetilde T<T$.
\end{proof}

\begin{lemma}\label{lem:HC}
Let $A-\lambda B$ and $C-\mu D$ be a pair of regular matrix pencils and let
$\widetilde A-\lambda \widetilde B$ and $\widetilde C-\mu \widetilde D$ be
their arbitrarily close perturbations such that ${\cal B}(\widetilde A-\lambda \widetilde B)$
is a HC cover of ${\cal B}(A-\lambda B)$ and
${\cal B}(\widetilde C-\mu \widetilde D)$ is a HC cover of ${\cal B}(C-\mu D)$.
Then $\widetilde T\le T$, where $T={\rm dim}(\ker(A\otimes D-B\otimes C))$ and $\widetilde T={\rm dim}(\ker(\widetilde A\otimes \widetilde D-\widetilde B\otimes \widetilde C))$.
\end{lemma}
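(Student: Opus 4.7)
The plan is to use Theorem~\ref{thm:Tomaz} to reduce the kernel dimension to an explicit combinatorial sum, and then to compare that sum before and after the HC perturbations. Since both pencils are regular, only the block-pair types (a) and (b) of that theorem contribute, and hence
$$T = \sum_{\alpha\in\CC\cup\{\infty\}} K(M_\alpha,N_\alpha),\qquad K(M,N):=\sum_{d\in M}\sum_{e\in N}\min(d,e),$$
where $M_\alpha$ (respectively $N_\alpha$) is the multiset of Jordan (or, at $\alpha=\infty$, infinite) block sizes of $A-\lambda B$ (respectively $C-\mu D$) at $\alpha$. As in Lemma~\ref{lem:MLW}, a generic perturbation gives $\widetilde T=0$, so I will place the perturbed eigenvalues so as to maximize $\widetilde T$, i.e., find an optimal matching between the eigenvalues of the two perturbed pencils.

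I will then split into cases according to whether the eigenvalue $\alpha$ of $A-\lambda B$ at which the HC acts coincides with the eigenvalue $\alpha^*$ of $C-\mu D$ at which its HC acts. Denote the original block sizes by $d_1\ge\cdots\ge d_m$ and $e_1\ge\cdots\ge e_\ell$, their HC splittings by $b_p=\min(d_p,C_1)$, $c_p=d_p-b_p$, $f_q=\min(e_q,C_2)$, $g_q=e_q-f_q$, and the new eigenvalues by $\beta'\ne\gamma'$ (near $\alpha$) and $\beta''\ne\gamma''$ (near $\alpha^*$). When $\alpha=\alpha^*$, the only contribution that can change is the one at this eigenvalue, and only two matchings between $\{\beta',\gamma'\}$ and $\{\beta'',\gamma''\}$ are possible; each yields a $(p,q)$-summand of the form $\min(b_p,f_q)+\min(c_p,g_q)$ or $\min(b_p,g_q)+\min(c_p,f_q)$, and the elementary inequality
$$\min(x_1,y_1)+\min(x_2,y_2)\le\min(x_1+x_2,\,y_1+y_2),$$
which follows from bounding the left side both by $x_1+x_2$ and by $y_1+y_2$, dominates each such summand by $\min(d_p,e_q)$ and gives $\widetilde T\le T$ after summation.

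When $\alpha\ne\alpha^*$, for sufficiently small perturbations $\{\beta',\gamma'\}$ and $\{\beta'',\gamma''\}$ lie in disjoint neighbourhoods, so the two HC moves decouple. At $\alpha$ the multiset $N_\alpha$ in pencil~2 is unchanged, and since $\beta'\ne\gamma'$ at most one of them can coincide with the (perturbed) eigenvalue $\alpha$ of pencil~2; the contribution near $\alpha$ is therefore bounded by $\max\bigl(K(M',N_\alpha),K(M'',N_\alpha)\bigr)\le K(M_\alpha,N_\alpha)$, using $b_p,c_p\le d_p$. A symmetric estimate at $\alpha^*$, together with the fact that all other eigenvalue contributions remain unchanged, yields $\widetilde T\le T$. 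Infinite eigenvalues require no separate treatment, since Theorem~\ref{thm:Tomaz} pairs infinite blocks only with other infinite blocks. The main obstacle will be Case~2: one must verify that for arbitrarily small perturbations no cross-matching between an HC output near $\alpha$ and one near $\alpha^*$ is possible, and that allowing at most one of $\beta',\gamma'$ to be matched with $\alpha$ in pencil~2 really captures the optimum.
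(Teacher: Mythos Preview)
Your proposal is correct and follows essentially the same approach as the paper: both reduce to the combinatorial formula from Theorem~\ref{thm:Tomaz}, both argue that the worst case is when the split eigenvalues of the two pencils are matched, and both conclude via the elementary inequality $\min(x_1,y_1)+\min(x_2,y_2)\le\min(x_1+x_2,y_1+y_2)$ applied to each pair of blocks. Your treatment is somewhat more explicit than the paper's: you separate the cases $\alpha=\alpha^*$ and $\alpha\ne\alpha^*$ and in the former you explicitly consider both matchings of $\{\beta',\gamma'\}$ with $\{\beta'',\gamma''\}$, whereas the paper dispatches $\alpha\ne\alpha^*$ with the one-line remark that applying HC to only one pencil clearly cannot increase $T$, and in the matched case works with a single pairing.
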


\begin{proof}
A HC move splits an eigenvalue into two nearby eigenvalues. As only the eigenvalues
that pencils $A-\lambda B$ and $C-\mu D$ have in common (including the infinite ones) contribute to $T$, we only have to consider a situation
when we apply a HC move to a common eigenvalue $\alpha$.

If $d_1\ge \cdots \ge d_m$ and $e_1\ge \cdots \ge e_\ell$ are sizes of the blocks corresponding to $\alpha$ in the KCF of $A-\lambda B$ and
$C-\mu D$, respectively, then these blocks contribute $T(\alpha)$ from (\ref{T_alpha}) to $T$. If we perform the HC move only to one of the pencils, then clearly $\widetilde T\le T$. It is easy to see that we get maximum possible $\widetilde T$ when we apply the HC move to both pencils in such way that eigenvalue $\alpha$ simultaneously
splits into two common eigenvalues $\alpha_1$ and $\alpha_2$ of $\widetilde A-\lambda \widetilde B$ and $\widetilde C-\mu \widetilde D$.

It is enough to study the situation for just one pair of blocks. Suppose that the block $J_{d}(\alpha)$ in $A-\lambda B$ splits into blocks
$J_{d_1}(\alpha_1)$ and $J_{d_2}(\alpha_2)$, and similarly, the block $J_{e}(\alpha)$ in $C-\mu D$ splits into blocks
$J_{e_1}(\alpha_1)$ and $J_{e_2}(\alpha_2)$, such that $d=d_1+d_2$ and $e=e_1+e_2$. Without loss of generality we can assume that $d\le e$.
Then
$$\min(d_1,e_1)+\min(d_2,e_2)\le d_1+d_2 = d = \min(d,e).$$

As the above inequality holds for all possible pairs of one block from $A-\lambda B$ and the other one from $C-\mu D$, it follows that $\widetilde T\le T$.
\end{proof}
\section{Relation between the eigenvalues of (\ref{problem}) and (\ref{drugi})}\label{sec::eig}

In the nonsingular case the eigenvalues of (\ref{problem}) agree
with the eigenvalues of the associated pair of generalized
eigenvalue problems (\ref{drugi}), see, e.g., \cite{Atkinson}.
In this section we show that in the singular case in a similar way the finite
regular eigenvalues of (\ref{problem}) are related
to the finite regular eigenvalues of (\ref{drugi}). While this was established
for algebraically simple eigenvalues of (\ref{problem}) with coprime
 polynomials $p_1$ and $p_2$ in \cite{MP2}, we extend the relation
in this paper to all eigenvalues.

First we show that we can assume that the configuration of eigenvalues is as general as possible.

\begin{lemma}\label{lem:predpostavke}
It is always possible to apply a linear substitution of parameters
$\lambda$ and $\mu$ in (\ref{problem}) in such way that
if
$(\lambda_0,\mu_0)\in\CC^2$ is an eigenvalue of (\ref{problem}) as in Definition
\ref{def:eigW} then the following is true:
\begin{enumerate}
\item Matrix pencil $W_i(\lambda_0,0)-\gamma C_i$ is regular for $i=1,2$.
\item Matrix pencil $W_i(0,\mu_0)-\beta B_i$ is regular for $i=1,2$.
\item In a small neighbourhood of $\lambda_0$ it holds that $\rank(W_i(\eta,\mu_0))=n_i$ for all $\eta\ne \lambda_0$ and
$i=1,2$.
\item In a small neighbourhood of $\mu_0$ if holds that $\rank(W_i(\lambda_0,\theta))=n_i$ for all $\theta\ne \mu_0$ and $i=1,2$.
\item If $(\lambda_1,\mu_1)\ne (\lambda_0,\mu_0)$ is another eigenvalue of (\ref{problem})
then $\lambda_1\ne \lambda_0$ and $\mu_1\ne  \mu_0$.
\item In a small
neighbourhood of $(\lambda_0,\mu_0)$ it holds for all $(\eta,\theta)\ne (\lambda_0,\mu_0)$ that
$$\sum_{i=1}^2\rank(W_i(\lambda_0,\mu_0))< \sum_{i=1}^2\rank(W_i(\eta,\theta)).$$
\end{enumerate}
\end{lemma}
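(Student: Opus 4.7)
The plan is to exhibit a single invertible linear substitution of the form $(\lambda,\mu) = T(\lambda',\mu')+c$, with $T\in GL_2(\complex)$ and $c\in\complex^2$, and to show that each of the six conditions is satisfied at every finite eigenvalue after excluding only a proper algebraic subvariety from the parameter space of choices of $(T,c)$. The enabling finiteness is that, writing $p_i = q\,\tilde{p}_i$ with $q=\gcd(p_1,p_2)$, Definition~\ref{def:eigW} identifies finite eigenvalues of (\ref{problem}) with the common zeros of the coprime polynomials $\tilde{p}_1$ and $\tilde{p}_2$, which are finite in number by B\'ezout.

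First I would handle conditions (1) and (2). The pencil $W_i(\lambda_0,0)-\gamma C_i$ has determinant $p_i(\lambda_0,-\gamma)$, so it is regular if and only if the line $\{\lambda = \lambda_0\}$ is not an irreducible component of $\{p_i=0\}$; condition (2) is symmetric in $\mu_0$. Since each of $p_1,p_2$ admits only finitely many line components, a generic choice of $T$ sends the ``new vertical'' and ``new horizontal'' directions to directions distinct from those of all line components of $p_1,p_2$, simultaneously at every eigenvalue. Conditions (3) and (4) then follow from (2) and (1) respectively, after shrinking the neighborhood: the univariate restrictions $p_i(\cdot,\mu_0)$ and $p_i(\lambda_0,\cdot)$ are not identically zero, hence have only isolated roots. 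Condition (5) excludes further finitely many linear conditions on $(T,c)$ -- one for each pair of distinct eigenvalues that would share a coordinate -- and so is met generically.

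Condition (6) requires no further substitution-level work: it is automatic whenever $(\lambda_0,\mu_0)$ is an isolated common zero of $p_1$ and $p_2$, which holds here because the common curve $\{q=0\}$ avoids $(\lambda_0,\mu_0)$ by Definition~\ref{def:eigW} while the residual intersections of $\tilde{p}_1,\tilde{p}_2$ are isolated. In a sufficiently small neighborhood, $(\lambda_0,\mu_0)$ is the only point where both $p_1$ and $p_2$ vanish, so for any nearby $(\eta,\theta)\ne(\lambda_0,\mu_0)$ at least one $p_i(\eta,\theta)$ is nonzero, giving $\rank W_i(\eta,\theta)=n_i$ strictly above $\rank W_i(\lambda_0,\mu_0)$; by lower semi-continuity of rank the other pencil's rank cannot decrease, so the sum strictly exceeds the sum at the eigenvalue.

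The main obstacle is the bookkeeping for (1), (2) and (5): one must verify that a \emph{single} substitution works for \emph{all} finitely many finite eigenvalues simultaneously and avoids \emph{all} line components of both $p_1$ and $p_2$. This reduces to intersecting finitely many Zariski-open subsets of $GL_2(\complex)\times\complex^2$, which is nonempty. Condition (6), despite being stated last and appearing subtle, is in fact the easiest, forced by the isolation of $(\lambda_0,\mu_0)$ together with elementary semi-continuity of the rank function.
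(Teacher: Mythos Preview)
Your proposal is correct and follows essentially the same strategy as the paper: choose a generic linear substitution (the paper uses a one-parameter orthogonal rotation $R(\phi)$ rather than the full ${\rm GL}_2(\complex)\times\complex^2$), argue that conditions (1), (2), and (5) each exclude only finitely many bad choices, and note that (6) holds automatically at any isolated common zero of $p_1,p_2$ by lower semicontinuity of rank. Your derivation of (3) and (4) as direct consequences of (2) and (1)---the univariate restrictions $p_i(\cdot,\mu_0)$ and $p_i(\lambda_0,\cdot)$ are nonzero polynomials, hence have isolated zeros---is in fact cleaner than the paper's argument, which instead invokes that a generic rotation avoids vertical tangents of the component curves at $(\lambda_0,\mu_0)$.
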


\begin{proof}
We can apply a substitution in the form of an orthogonal rotation
\begin{equation}\label{eq:rot_sub}\left[\begin{matrix}\lambda\cr \mu\end{matrix}\right]=
\left[\begin{matrix}\cos \phi & -\sin \phi\cr \sin \phi & \cos \phi\end{matrix}\right]
\left[\begin{matrix}\widetilde \lambda\cr \widetilde \mu\end{matrix}\right]=:
R(\phi)
\left[\begin{matrix}\widetilde \lambda\cr \widetilde \mu\end{matrix}\right]
,\end{equation}
where $\phi\in[0,2\pi)$.
\begin{enumerate}
\item We know that problem (\ref{problem}) has less than $n_1n_2$ eigenvalues. Therefore, for a generic $\phi$ the rotation
(\ref{eq:rot_sub})
leads to eigenvalues $(\widetilde\lambda,\widetilde\mu)$ such that different eigenvalues differ in both components.
\item Matrix pencil $W_i(\lambda_0,0)-\gamma C_i$ is singular if $p_i(\lambda_0,\gamma)\equiv 0$, which means that $p_i(\lambda,\mu)$ is divisible by $\lambda-\lambda_0$. As $p_i$ for $i=1,2$ can have only finitely many such divisors, none of them is present after a substitution
    (\ref{eq:rot_sub}) for a generic $\phi$.
\item Similar as above.
\item The assumption is that $W_i(\lambda,\mu)$ is regular, i.e., ${\rm nrank}(W_i(\lambda,\mu))=n_i$ for $i=1,2$. Eigenvalue $(\lambda_0,\mu)$ lies on one or more curves $q_i(\lambda,\mu)=0$, where $q_i(\lambda,\mu)$ divides $p_i(\lambda,\mu)$.
    If we use a generic rotation $\phi$ than none of these curves has a vertical tangent at $(\lambda_0,\mu_0)$ and
    point 4. holds.
\item Similar as above.
\item This property does not depend on the substitution, but follows from the assumption that $(\lambda_0,\mu_0)$ is an isolated solution of $p_1(\lambda,\mu)=p_2(\lambda,\mu)=0$. Therefore, there exists a neighbourhood of
    $(\lambda_0,\mu_0)$ that does not include other eigenvalues and does not intersect with any curve $q(\lambda,\mu)=0$, where
    $q(\lambda,\mu)$ is a common factor of $p_1(\lambda,\mu)$ and $p_2(\lambda,\mu)$.
\end{enumerate}
\end{proof}

\begin{theorem}\label{thm:WtoDelta} Let $(\lambda_0,\mu_0)\in\CC^2$ be an eigenvalue of (\ref{problem})
and let all assumptions from Lemma \ref{lem:predpostavke} be satisfied. Then
$(\lambda_0,\mu_0)$ is an eigenvalue of the associated problem (\ref{drugi}).
\end{theorem}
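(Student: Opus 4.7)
My plan is to transform $\Delta_1-\nu\Delta_0=C_1\otimes W_2(\nu,0)-W_1(\nu,0)\otimes C_2$ into an operator-determinant description of the kernel, apply Theorem~\ref{thm:Tomaz}, and track the change in $\dim\ker(\Delta_1-\nu\Delta_0)$ as $\nu$ moves away from $\lambda_0$. For each $\nu$ near $\lambda_0$, the identity $\Delta_1-\nu\Delta_0=(A_1+\nu B_1)\otimes C_2-C_1\otimes(A_2+\nu B_2)$ realises this (up to sign) as the operator determinant attached to the pencils $W_i(\nu,\mu)$ viewed as pencils in $\mu$. By Lemma~\ref{lem:predpostavke}(1) these are regular for all $\nu$ close to $\lambda_0$, so only cases (a)--(b) of Theorem~\ref{thm:Tomaz} apply and
\[\dim\ker(\Delta_1-\nu\Delta_0)=T_\nu:=\sum_{\alpha}T_\nu(\alpha),\]
summed over common eigenvalues $\alpha$ of $W_1(\nu,\cdot)$ and $W_2(\nu,\cdot)$, with $T_\nu(\alpha)=\sum_{i,j}\min(d^{(1)}_i,d^{(2)}_j)$ built from the Jordan block sizes at $\alpha$.

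To verify condition (a) of Definition~\ref{def:eigDelta}, I compare $T_{\lambda_0}$ with $T_\xi$ for $\xi\ne\lambda_0$ close to $\lambda_0$. Since $(\lambda_0,\mu_0)$ is an eigenvalue of (\ref{problem}), the pair contributes $T_{\lambda_0}(\mu_0)\ge 1$. By Lemma~\ref{lem:predpostavke}(3),(5),(6) the pencils $W_1(\xi,\cdot)$ and $W_2(\xi,\cdot)$ share no eigenvalue in a neighbourhood of $\mu_0$: the Jordan blocks at $\mu_0$ burst apart asynchronously into eigenvalue sets of $W_1$ and $W_2$ that do not meet. For common eigenvalues away from $\mu_0$ (including those coming from common factor curves of $p_1$ and $p_2$), the bundles $\mathcal{B}(W_i(\xi,\cdot))$ cover $\mathcal{B}(W_i(\lambda_0,\cdot))$ through a sequence of MLW and HC moves, and iterated application of Lemmas~\ref{lem:MLW} and \ref{lem:HC} bounds their combined contribution by that at $\lambda_0$. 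Combining, $T_\xi<T_{\lambda_0}$, so $\rank(\Delta_1-\lambda_0\Delta_0)<\rank(\Delta_1-\xi\Delta_0)\le\nrank(\Delta_1-\lambda\Delta_0)$, establishing (a). Condition (b) follows by the symmetric argument with the roles of $\lambda$ and $\mu$ exchanged, using Lemma~\ref{lem:predpostavke}(2),(4).

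For the common regular eigenvector required by (c), I take $z=x_1\otimes x_2$. A direct computation using $(A_i+\lambda_0 B_i+\mu_0 C_i)x_i=0$ verifies $z\in\ker(\Delta_1-\lambda_0\Delta_0)\cap\ker(\Delta_2-\mu_0\Delta_0)$. The Kronecker chain construction in Theorem~\ref{thm:Tomaz} identifies $z=u_1\otimes v_1$ as the leading element of the chain attached to the common finite eigenvalue $\mu_0$ of the regular pencils $W_i(\lambda_0,\cdot)$, which places it (in the KCF of $\Delta_1-\lambda\Delta_0$ as a pencil in $\lambda$) inside a finite Jordan block at $\lambda_0$ rather than in any right singular block. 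If, to the contrary, $z\in\mathcal{R}(\Delta_1,\Delta_0)=\bigcup_{\xi}\gker(\Delta_1-\xi\Delta_0)$, then $z=p(\lambda_0)$ for some polynomial vector with $(\Delta_1-\lambda\Delta_0)p(\lambda)\equiv 0$; the vectors $p(\xi)$ would then lie in $\ker(\Delta_1-\xi\Delta_0)$ for every $\xi$ near $\lambda_0$ and tend to $z$, but the vanishing of the $\mu_0$-contribution at every $\xi\ne\lambda_0$ shown in the previous paragraph forces the generic kernel near $\xi$ to stay transverse to the direction of $z$, a contradiction. The analogous argument yields $z\notin\mathcal{R}(\Delta_2,\Delta_0)$.

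The main obstacle is the last step: the stratification lemmas control only total kernel dimensions, not the geometric position of a specific vector inside the kernel, so ruling out $z\in\mathcal{R}(\Delta_j,\Delta_0)$ requires locating $z$ within the Kronecker decomposition of $\Delta_j-\lambda\Delta_0$. The situation is most delicate when $p_1$ and $p_2$ share a nontrivial common factor, since the associated common curve enlarges $\mathcal{R}(\Delta_j,\Delta_0)$ with additional polynomial chains, and one must check that $z$ does not acquire a spurious polynomial extension along this curve---which the isolation assumption in Lemma~\ref{lem:predpostavke}(6) is designed to prevent.
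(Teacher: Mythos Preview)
Your proposal follows essentially the same route as the paper: rewrite $\Delta_1-\eta\Delta_0$ as an operator determinant of the one-parameter pencils $W_i(\eta,0)-\gamma C_i$, invoke Theorem~\ref{thm:Tomaz} to describe the kernel, and then use the stratification Lemmas~\ref{lem:MLW} and~\ref{lem:HC} to show that every MLW or HC move in the passage from $\eta=\lambda_0$ to a nearby generic $\eta$ can only shrink the kernel, with a strict drop coming from the disappearance of the common finite eigenvalue $\mu_0$. Your treatment of conditions (a) and (b) matches the paper's almost line for line.

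On condition (c) the paper is just as terse as you fear: it simply asserts that for $z\in\ker W_1(\lambda_0,\mu_0)\otimes\ker W_2(\lambda_0,\mu_0)$ one has $z\notin\gker(\Delta_i-\lambda_0\Delta_0)$, saying this is ``clear from the above''. Your contrapositive sketch is in fact the right way to unpack that assertion, and the step you were unsure about is valid: since in the KCF basis the right-singular part of $\ker(\Delta_1-\lambda_0\Delta_0)$ is exactly $\gker(\Delta_1-\lambda_0\Delta_0)$, the hypothesis $z\in\mathcal R(\Delta_1,\Delta_0)\cap\ker(\Delta_1-\lambda_0\Delta_0)$ really does force $z\in\gker(\Delta_1-\lambda_0\Delta_0)$, hence $z=p(\lambda_0)$ for a polynomial kernel vector $p$. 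The remaining ``transversality'' claim can be made precise by noting that the Theorem~\ref{thm:Tomaz} basis vectors at $\lambda_0$ are linearly independent and that $z=u_1\otimes v_1$ is one of them, attached to the pair $(J_{d_1}(\mu_0),J_{d_2}(\mu_0))$; the limit of $\ker(\Delta_1-\xi\Delta_0)$ as $\xi\to\lambda_0$ lies in the span of the remaining basis vectors (those coming from common eigenvalues other than $\mu_0$ and from $N$-block pairs), so it cannot contain $z$. This argument does not depend on whether $p_1$ and $p_2$ share a common factor, so your worry in the final paragraph is not an additional obstacle here.
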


\begin{proof}
In order to obtain a basis for $\ker(\Delta_1-\eta \Delta_0)$ from Theorem \ref{thm:Tomaz},
we apply the equality
\begin{equation}\label{eq:zveza}
\Delta_1-\eta \Delta_0 = W_1(\eta,0)\otimes C_2 - C_1\otimes W_2(\eta,0).
\end{equation}
It follows from item 3 in Lemma \ref{lem:predpostavke} that $W_i(\eta,0)-\gamma C_i$ is regular for
$\eta$ sufficiently close to $\lambda_0$  and its
KCF contains only blocks of types $J_d$ and $N_d$ for $i=1,2$.
We will show that $\rank(\Delta_1-\lambda_0 \Delta_0)<
      \nrank(\Delta_1-\eta \Delta_0)$ for all $\eta\ne\lambda_0$ sufficiently close to $\lambda_0$.

We know from Theorem \ref{thm:Tomaz} that
a basis for $\ker(\Delta_1-\lambda_0 \Delta_0)$ is the union of sets of linearly
independent  vectors associated with the pairs of Kronecker blocks of the type
$(J_{d_1}(\gamma_1),\; J_{d_2}(\gamma_2))$, where
       $\gamma_1=\gamma_2$, or
    $(N_{d_1},\; N_{d_2})$.
If we change $\lambda_0$ to $\eta$ by an infinitesimally small perturbation, then $W_i(\eta,0)-\gamma C_i$ is still
regular for $i=1,2$. A basis for $\ker(\Delta_1-\eta \Delta_0)$ thus depends
on pairs of Kronecker blocks of the type
$(J_{d_1}'(\gamma_1),\; J_{d_2}'(\gamma_2))$, where
       $\gamma_1=\gamma_2$, or
    $(N_{d_1}',\; N_{d_2}')$, where $J_d'$ and $N_d'$ are blocks from
the KCF of $W_i(\eta,0)-\gamma C_i$ for $i=1,2$.

We now apply the stratification theory from Section \ref{sec::strat}. It follows
that one gets from the bundle of $W_i(\lambda_0,0)-\gamma C_i$ to the bundle  of
$W_i(\eta,0)-\gamma C_i$ by a finite sequence of the moves MLW and HC introduced in Section \ref{sec::strat}.
There exists a finite sequence of pairs of bundles
$({\cal B}_{1j},{\cal B}_{2j})$, $j=0,\ldots,k$, such that
${\cal B}_{i0}={\cal B}(W_i(\lambda_0,0)-\gamma C_i)$,
${\cal B}_{ik}={\cal B}(W_i(\eta,0)-\gamma C_i)$, and
either ${\cal B}_{i,j+1}={\cal B}_{ij}$
or ${\cal B}_{i,j+1}$ covers ${\cal B}_{ij}$ for $i=1,2$
and $j=0,\ldots,k-1$. In addition, the eigenvalues in ${\cal B}_{i,j+1}$ are  arbitrarily
close to the eigenvalues in ${\cal B}_{ij}$, which means that, due to the continuity of the eigenvalues,
${\cal B}_{i,j+1}$ cannot have less distinct eigenvalues as
${\cal B}_{ij}$.

Let us denote $T_0={\rm dim}(\ker(\Delta_1-\lambda_0 \Delta_0))$ and
$T_k={\rm dim}(\ker(\Delta_1-\eta \Delta_0))$. We know from Theorem \ref{thm:Tomaz} that we can compute $T_0$ and $T_k$ from the canonical structures of
$W_i(\lambda_0,0)-\gamma C_i$ and $W_i(\eta,0)-\gamma C_i$ for $i=1,2$. In a similar way
we define $T_j$ for $j=1,\ldots,k-1$ as the number of linearly independent vectors that we get for the pair of bundles $({\cal B}_{1j},{\cal B}_{2j})$ from matching blocks in ${\cal B}_{1j}$ and ${\cal B}_{2j}$ as in Theorem \ref{thm:Tomaz}, where we assume that the eigenvalues of ${\cal B}_{1j}$ match the eigenvalues
${\cal B}_{2j}$ in a way that gives the maximum possible $T_j$.

Lemmas \ref{lem:MLW} and \ref{lem:HC} show that $T_{j+1}\le T_j$ for $j=0,\ldots,k-1$. For a MLW move it is enough to consider the situation when we apply MLW only to one bundle in a pair as we can combine a sequence of two individual changes into one step. This is done in Lemma \ref{lem:MLW}. On the other hand, if we apply a HC move then we assume that a HC move was applied to both bundles in such way that we get the same pair of eigenvalues in both bundles, otherwise clearly
$T_{j+1}\le T_{j}$. This situation is covered in Lemma \ref{lem:HC}. If follows that $T_k\le T_0$.

We can also show that $T_k<T_0$. In the beginning, pencils $W_i(\lambda_0,0)-\gamma C_i$, $i=1,2$, have a common eigenvalue $-\mu_0$ and combinations
of the form $(J_{d_1}(-\mu_0),J_{d_2}(-\mu_0))$ contribute at least one linearly independent vector to
$\ker(\Delta_1-\lambda_0 \Delta_0)$. In the end pencils $W_i(\eta_0,0)-\gamma C_i$, $i=1,2$, do not have a common eigenvalue $\theta$
close to $-\mu$ as this would mean that $(\eta,-\theta)$ close to $(\lambda_0,\mu_0)$ and $\eta\ne \lambda_0$ is an eigenvalue of
(\ref{problem}) which is in contradiction with $(\lambda_0,\eta_0)$ being an isolated solution of (\ref{detproblem}).
Therefore, $T_k<T_0$ and
$\rank(\Delta_1-\lambda_0 \Delta_0)<
      \nrank(\Delta_1-\eta \Delta_0)$.
This means that $\lambda_0$ is an eigenvalue of $\Delta_1-\lambda \Delta_0$. In the same way we can show that
$\mu_0$ is an eigenvalue of $\Delta_2-\mu \Delta_0$.

What remains to show is that there exists a common regular eigenvector that
satisfies Definition \ref{def:eigDelta}.
If we take a nonzero $z\in\ker(W_1(\lambda_0,\mu_0))\otimes \ker(W_2(\lambda_0,\mu_0))$, then
it is clear from the above that $z\in \ker(\Delta_1-\lambda_0 \Delta_0)\cap \ker(\Delta_2-\mu_0 \Delta_0)$ but
$z\not\in{\rm GKer}(\Delta_1-\lambda_0 \Delta_0)$ and $z\not\in{\rm GKer}(\Delta_2-\mu_0 \Delta_0)$. This shows
that $z$ is a common regular eigenvector and $(\lambda_0,\mu_0)$ is an eigenvalue of (\ref{drugi}).
\end{proof}

Let us emphasize that Theorem \ref{thm:WtoDelta} also covers the case when $p_1$ and $p_2$ have a nontrivial common factor.
We showed that, regardless to the existence of a common factor, each eigenvalue of (\ref{problem}) as defined in Definition \ref{def:eigW} is an eigenvalue of
(\ref{drugi}). To show the implication in the other direction, we require that $p_1$ and $p_2$ are coprime. The following theorem
is a generalization of Theorem 3.7 in \cite{MP2} where it is in addition required that all eigenvalues of (\ref{problem}) are algebraically simple.

\begin{theorem}\label{thm:DeltatoW} Let $(\lambda_0,\mu_0)\in\CC^2$ be an eigenvalue of a pair of matrix pencils (\ref{drugi}) associated to
a (singular) two-parameter eigenvalue problem (\ref{problem}) that satisfies all assumptions in Lemma \ref{lem:predpostavke}.
If the characteristic polynomials of (\ref{problem}) are coprime, then $(\lambda_0,\mu_0)$ is an eigenvalue of (\ref{problem}).
\end{theorem}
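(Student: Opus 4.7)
I would prove Theorem~\ref{thm:DeltatoW} by combining the structural consequences of conditions~(a) and~(b) of Definition~\ref{def:eigDelta} with the genericity afforded by a linear substitution of parameters; condition~(c) then serves as a consistency check. Because $p_1,p_2$ are coprime, the set of $W$-eigenvalues is finite, so a generic orthogonal rotation $R(\phi)$ in the spirit of Lemma~\ref{lem:predpostavke} can arrange the following: (i) the pencils $W_i(\lambda_0,0)-\gamma C_i$ and $W_i(0,\mu_0)-\beta B_i$ are regular for $i=1,2$; (ii) any two distinct $W$-eigenvalues have both distinct $\lambda$-coordinates and distinct $\mu$-coordinates; (iii) $\lambda_0$ (resp.~$\mu_0$) is not one of the finitely many values at which the $N$-block structure of $W_i(\lambda,0)-\gamma C_i$ (resp.~$W_i(0,\mu)-\beta B_i$) degenerates on a punctured neighbourhood.

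With this setup I would use the identity
\[
\Delta_1-\lambda\Delta_0 \;=\; W_1(\lambda,0)\otimes C_2 \;-\; C_1\otimes W_2(\lambda,0)
\]
and apply Theorem~\ref{thm:Tomaz}. By~(i) the pencils $W_i(\eta,0)-\gamma C_i$ have KCFs containing only $J_d(\cdot)$ and $N_d$ blocks for $\eta$ near $\lambda_0$, and hence $\ker(\Delta_1-\eta\Delta_0)$ decomposes into a $(J,J)$-part from common finite eigenvalues of the two pencils plus an $(N,N)$-part from their infinite-eigenvalue blocks. By~(iii) the $(N,N)$-part has constant dimension on a neighbourhood of $\lambda_0$, so the strict drop $\rank(\Delta_1-\lambda_0\Delta_0)<\nrank(\Delta_1-\lambda\Delta_0)$ required by Definition~\ref{def:eigDelta}(a) must be forced by a $(J,J)$-pairing present at $\lambda_0$ but absent for generic $\eta$. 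Equivalently, there exists a common finite eigenvalue $\gamma^*\in\CC$ of $W_1(\lambda_0,0)-\gamma C_1$ and $W_2(\lambda_0,0)-\gamma C_2$, and $(\lambda_0,-\gamma^*)$ is a $W$-eigenvalue. Applying the same argument to condition~(b) via $\Delta_2-\mu\Delta_0 = W_1(0,\mu)\otimes B_2 - B_1\otimes W_2(0,\mu)$ yields a $W$-eigenvalue $(-\beta^*,\mu_0)$.

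Finally, genericity~(ii) forces the two $W$-eigenvalues $(\lambda_0,-\gamma^*)$ and $(-\beta^*,\mu_0)$ to coincide, so $\lambda_0=-\beta^*$ and $\mu_0=-\gamma^*$, and $(\lambda_0,\mu_0)$ is itself a $W$-eigenvalue. Condition~(c) is consistent with this: by the tensor-product formula at the end of Theorem~\ref{thm:Tomaz}, the $(J,J)$-pairing at $\gamma^*=-\mu_0$ produces a common regular eigenvector in $\ker W_1(\lambda_0,\mu_0)\otimes\ker W_2(\lambda_0,\mu_0)$, which is precisely the witness used at the end of the proof of Theorem~\ref{thm:WtoDelta}.

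The hard part will be justifying that the rank drop in the middle paragraph is attributable \emph{entirely} to a $(J,J)$-pairing rather than to an infinite-block restructuring. This is where coprimality is indispensable: it makes both the $W$-eigenvalues and the special values of $\lambda$ (or $\mu$) at which the $N$-block structure of the auxiliary pencils degenerates into finite, avoidable sets, so~(ii) and~(iii) can simultaneously be achieved by the same rotation. Coprimality also rules out a persistent $(J,J)$-contribution along a one-dimensional curve, a phenomenon that would occur when $p_1$ and $p_2$ shared a common factor, enlarging $\mathcal{R}(\Delta_1,\Delta_0)$ and destroying the one-to-one correspondence between kernel drops and isolated $W$-eigenvalues on which the argument depends.
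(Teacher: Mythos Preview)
Your overall strategy matches the paper's: use Theorem~\ref{thm:Tomaz} on the identity $\Delta_1-\lambda\Delta_0=W_1(\lambda,0)\otimes C_2-C_1\otimes W_2(\lambda,0)$ to produce a $W$-eigenvalue with first coordinate $\lambda_0$, do the same for $\mu_0$, and then use a genericity argument to pin down $(\lambda_0,\mu_0)$ itself. However, your final step does not close as written, and your mechanism for excluding the $(N,N)$ contribution differs from the paper's in a way that costs you an extra hypothesis you have not justified.

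\textbf{The final step.} From conditions (a) and (b) you extract $W$-eigenvalues $(\lambda_0,-\gamma^*)$ and $(-\beta^*,\mu_0)$ and then assert that genericity~(ii) forces them to coincide. It does not: your~(ii) only says that two \emph{distinct} $W$-eigenvalues differ in both coordinates, and nothing prevents $(\lambda_0,-\gamma^*)$ and $(-\beta^*,\mu_0)$ from being distinct and differing in both coordinates. The paper closes this by bringing the given $\Delta$-eigenvalue $(\lambda_0,\mu_0)$ back into the comparison: by Theorem~\ref{thm:WtoDelta} the $W$-eigenvalue $(\lambda_0,-\gamma^*)$ is also a $\Delta$-eigenvalue, and it shares its first coordinate with $(\lambda_0,\mu_0)$. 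One then checks that (\ref{drugi}) has only finitely many eigenvalues (their coordinates are among those of $W$-eigenvalues) and extends item~5 of Lemma~\ref{lem:predpostavke} to $\Delta$-eigenvalues, which forces $-\gamma^*=\mu_0$. Your argument never re-uses the datum $(\lambda_0,\mu_0)$ after the opening line, and without it the conclusion cannot be reached.

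\textbf{Ruling out the $(N,N)$ case.} You invoke an extra hypothesis~(iii), namely that the $N$-block structure of $W_i(\lambda,0)-\gamma C_i$ is already generic at $\lambda=\lambda_0$, so that the rank drop in~(a) must come from a $(J,J)$ pairing. This is plausible, but you have not shown that a single rotation simultaneously achieves (i), (ii) and~(iii) for the specific $\lambda_0$ in question, which itself moves with the rotation. The paper avoids this entirely by making condition~(c), not (a)--(b), do the work: it starts from the common regular eigenvector $z$, expands it in the basis of Theorem~\ref{thm:Tomaz}, and notes (following \cite{MP2}) that a combination of $(N_{d_1},N_{d_2})$-vectors alone would lie in ${\cal R}(\Delta_2,\Delta_0)$, contradicting regularity of $z$. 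Hence $z$ must carry a $(J_{d_1}(\gamma_0),J_{d_2}(\gamma_0))$ component, yielding the common finite eigenvalue $\gamma_0$ directly and without any assumption like your~(iii).
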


\begin{proof}As the proof is a slight modification of the proof of Theorem 3.7 in \cite{MP2}, we will omit most of the details.
We start with a nonzero $z\in\ker(\Delta_1-\lambda_0\Delta_0)\cap\ker(\Delta_2-\mu_0\Delta_0)$, which is a regular eigenvector
for the eigenvalue $(\lambda_0,\mu_0)$ and
apply the relation
\begin{equation}\label{eq:zveza2}
\Delta_1-\lambda \Delta_0 = W_1(\lambda,0)\otimes C_2 - C_1\otimes W_2(\lambda,0).
\end{equation}
If follows from Theorem \ref{thm:Tomaz} that $z$ is a linear combination of vectors associated with
pairs $(J_{d_1}(\alpha),J_{d_2}(\alpha))$ or $(N_{d_1},N_{d_2})$ of pencils
$W_1(\lambda,0)-\gamma C_1$ and $W_2(\lambda,0)-\gamma C_2$. All other combinations from Theorem \ref{thm:Tomaz} are not possible due
to Lemma \ref{lem:predpostavke}.

In a similar way as in \cite{MP2} we can show that $z$ cannot be a linear combination of vectors solely from the combinations of
type $(N_{d_1},N_{d_2})$. Namely, if this would be true, then $z$ would belong to the minimal reducing subspace ${\cal R}(\Delta_2,\Delta_0)$ which is in contradiction with $z$ being a regular eigenvector for the eigenvalue $\mu_0$ of $\Delta_2-\mu \Delta_0$.

Therefore, $z$ must include a contribution from at least one Kronecker pair of type $(J_{d_1}(\gamma_0),J_{d_2}(\gamma_0))$, where $\gamma_0$ is a common eigenvalue of $W_1(\lambda,0)-\gamma C_1$ and $W_2(\lambda,0)-\gamma C_2$.
As the characteristic polynomials of (\ref{problem}) are coprime, it follows that $(\lambda_0,-\gamma_0)$ is an eigenvalue of (\ref{problem}) and then by Theorem \ref{thm:WtoDelta} $(\lambda_0,-\gamma_0)$ is also an eigenvalue of (\ref{drugi}). In a similar way we can show that there exists an eigenvalue $(-\beta_0,\mu_0)$ of (\ref{problem}) and then
$(-\beta_0,\mu_0)$ is an eigenvalue of (\ref{drugi}) as well. Since (\ref{problem}) has only finitely many eigenvalues, it follows that (\ref{drugi}) has finitely many eigenvalue as well.

A generic substitution (\ref{eq:rot_sub}) used in the proof of Lemma \ref{lem:predpostavke} affects the problem
(\ref{drugi}) and its eigenvalues as well. Thus we can assume that item 5 from Lemma \ref{lem:predpostavke} holds for the eigenvalues of (\ref{drugi}) as well. But, then the only option is that $\gamma_0=-\mu_0$ and $\beta_0=-\lambda_0$ which means that
$(\lambda_0,\mu_0)$ is an eigenvalue of (\ref{problem}).
\end{proof}

In numerical experiments we often noticed that pencils $\Delta_1-\lambda \Delta_0$ and $\Delta_2-\mu \Delta_0$ belong to the same bundle, i.e., they have the same canonical structure regarding blocks and their sizes. The following lemma explains the observation.

 \begin{lemma}\label{lem:same_bundle}
 Let us assume that we applied a generic orthogonal rotation (\ref{eq:rot_sub}) to a two-parameter eigenvalue problem (\ref{problem}). Then the associated matrix pencils $\Delta_1-\lambda \Delta_0$ and $\Delta_2-\mu \Delta_0$ belong to the same bundle.
 \end{lemma}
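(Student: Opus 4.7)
The plan is to exhibit both pencils as specialisations of a common one-parameter family and then invoke the fact that within such a family the Kronecker structure is constant on a Zariski open dense subset of the parameter. As a preliminary step, I would compute how the operator determinants (\ref{Deltaik}) transform under the rotation (\ref{eq:rot_sub}). With $\tilde B_i = \cos\phi\, B_i + \sin\phi\, C_i$ and $\tilde C_i = -\sin\phi\, B_i + \cos\phi\, C_i$, the cross terms in $\tilde\Delta_0 = \tilde B_1\otimes\tilde C_2 - \tilde C_1\otimes\tilde B_2$ collapse by $\cos^2\phi + \sin^2\phi = 1$ to $\tilde\Delta_0 = \Delta_0$, while direct expansion yields $\tilde\Delta_1 = \cos\phi\,\Delta_1 + \sin\phi\,\Delta_2$ and $\tilde\Delta_2 = -\sin\phi\,\Delta_1 + \cos\phi\,\Delta_2$. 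Consequently the two pencils of interest become
\begin{equation*}
\tilde\Delta_1 - \lambda\tilde\Delta_0 = \cos\phi\,\Delta_1 + \sin\phi\,\Delta_2 - \lambda\Delta_0,
\end{equation*}
\begin{equation*}
\tilde\Delta_2 - \mu\tilde\Delta_0 = \cos(\phi+\pi/2)\,\Delta_1 + \sin(\phi+\pi/2)\,\Delta_2 - \mu\Delta_0,
\end{equation*}
which are two members of the common one-parameter family
$P(\xi,\eta) := \cos\xi\,\Delta_1 + \sin\xi\,\Delta_2 - \eta\,\Delta_0$
evaluated at $\xi = \phi$ and $\xi = \phi + \pi/2$ respectively.

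Next, I would argue that within the family $\{P(\xi,\cdot)\}_{\xi\in[0,2\pi)}$ the Kronecker canonical structure (the bundle) is constant on the complement of a finite subset of angles. This is a standard genericity statement compatible with the stratification framework recalled in Section \ref{sec::strat}: the bundle strata are constructible algebraic sets, and the analytic curve $\xi\mapsto P(\xi,\cdot)$ meets the top stratum of the family in an open dense subset of the parameter circle. Concretely, the Kronecker structure is determined by a finite chain of rank conditions on matrices whose entries are polynomials in $\cos\xi$ and $\sin\xi$, so the exceptional values of $\xi$ form a proper algebraic, hence finite, subset.

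For a generic rotation angle $\phi$ both $\phi$ and $\phi + \pi/2$ simultaneously avoid this finite exceptional set, so $P(\phi,\lambda)$ and $P(\phi+\pi/2,\mu)$ share the generic bundle of the family, which is exactly the claim of the lemma. The main obstacle is the rigorous justification that the Kronecker structure is constant on a Zariski open dense set of $\xi$; this reduces to the lower semicontinuity of rank as an algebraic function of the parameter and can be handled either by invoking the stratification machinery directly or by a short argument applied to each of the finitely many rank invariants defining the KCF.
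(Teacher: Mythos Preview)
Your proof is correct and takes essentially the same approach as the paper: both consider the one-parameter family $\cos\xi\,\Delta_1+\sin\xi\,\Delta_2-\eta\,\Delta_0$, observe that the rotated pencils $\tilde\Delta_1-\lambda\tilde\Delta_0$ and $\tilde\Delta_2-\mu\tilde\Delta_0$ are the specialisations at $\xi=\phi$ and $\xi=\phi+\pi/2$, and argue that the bundle is constant off a finite exceptional set of angles so a generic $\phi$ places both in the generic stratum. Your version is more explicit than the paper's in working out the transformation $\tilde\Delta_0=\Delta_0$, $\tilde\Delta_1=\cos\phi\,\Delta_1+\sin\phi\,\Delta_2$, $\tilde\Delta_2=-\sin\phi\,\Delta_1+\cos\phi\,\Delta_2$ and in justifying the finiteness of the exceptional set via rank conditions, but the idea is identical.
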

\begin{proof}We consider a family of pencils
\begin{equation}\label{eq:pencil_varphi}\cos\varphi \Delta_1 + \sin\varphi \Delta_2 - \lambda \Delta_0.
\end{equation}
As $\varphi$ goes from $0$ to $2\pi$, pencil (\ref{eq:pencil_varphi}) moves between bundles and returns back to the initial bundle at $\varphi=2\pi$. For all except finitely many values $\varphi$  pencil (\ref{eq:pencil_varphi}) belongs to the same bundle, which we can call the generic bundle
for (\ref{eq:pencil_varphi}). Since pencils $\Delta_1-\lambda \Delta_0$ and $\Delta_2-\mu \Delta_0$ are just special cases of
(\ref{eq:pencil_varphi}) for $\varphi=0$ and $\varphi=\pi/2$, respectively, they generically belong to the same bundle.
\end{proof}

\begin{example} We take example 4.4 from \cite{MP2}, where
$$W_1(\lambda,\mu)=\left[\begin{matrix}2+\lambda & 1+2 \lambda & \lambda \cr
 \lambda & 2+2\lambda+2\mu & \mu \cr  \mu & 1+2\mu & 2+\mu \end{matrix}\right]$$
 and
  $$W_2(\lambda,\mu)=\left[\begin{matrix}1+\lambda & 1+2\lambda & \lambda \cr
  \lambda & 1+2\lambda+2\mu & \mu \cr \mu & 1+2\mu & 1+\mu\end{matrix}\right].
$$
It turns out (for details see \cite{MP2}) that $(\lambda_1,\mu_1)=(-{1\over 2},-{1\over 2})$ is a quadruple eigenvalue
of both (\ref{problem}) and (\ref{drugi}), but due to the multiplicity an implication in either side could not be deducted from the theory available at the time. The extended theory in this paper
covers the example. As the characteristic polynomials of $W_1$ and $W_2$ are coprime, we can apply Theorems \ref{thm:WtoDelta} and \ref{thm:DeltatoW} to establish that the eigenvalues of (\ref{problem}) and (\ref{drugi}) do agree.
\end{example}

\begin{example} If we apply linearization 1 from \cite{BiRoots}
to bivariate polynomials
\begin{equation}\label{bivar_problem}
\begin{split}
  p_1(\lambda,\mu)&=(x^2+y^2-2)(x+y-1),\\
  p_2(\lambda,\mu)&=(x^2+y^2-2)(x-y+1),
\end{split}
\end{equation}
we get determinantal representations
$$W_1(\lambda,\mu)=\left[\begin{matrix}2-2\lambda-2\mu & -\lambda & \lambda + \mu & 0 & \lambda+\mu-1 \cr
\lambda & -1 & 0 & 0 & 0\cr
0 & \lambda & -1 & 0 & 0 \cr
\mu & 0 & 0 & -1 & 0\cr
0 & 0 & 0 & \mu & -1\end{matrix}\right]$$
and
$$W_2(\lambda,\mu)=\left[\begin{matrix}-2-2\lambda+2\mu & \lambda & \lambda - \mu & 0 & \lambda-\mu+1 \cr
\lambda & -1 & 0 & 0 & 0\cr
0 & \lambda & -1 & 0 & 0 \cr
\mu & 0 & 0 & -1 & 0\cr
0 & 0 & 0 & \mu & -1\end{matrix}\right]$$
such that $\det(W_i(\lambda,\mu))=p_i(\lambda,\mu)$ for $i=1,2$. The obtained two-parameter eigenvalue problem is singular by construction and, in addition, $p_1$ and $p_2$ have a common nontrivial factor. Theorem \ref{thm:WtoDelta} still holds for the eigenvalue $(\lambda_1,\mu_1)=(0,1)$ of the corresponding problem (\ref{problem}), which is an isolated solution of $p_1(\lambda,\mu)=p_2(\lambda,\mu)=0$. Point $(\lambda_1,\mu_1)$ is the only eigenvalue of the
corresponding problem (\ref{drugi}) and in this case eigenvalues of (\ref{problem}) and (\ref{drugi}) agree as well.
\end{example}

In the next section we explore the situation where (\ref{drugi}) has an eigenvalue on the common nontrivial factor of $\det(W_1(\lambda,\mu))$ and $\det(W_2(\lambda,\mu))$.

\section{Points on a nontrivial common curve}\label{sec::common}

Suppose that we have a singular two-parameter eigenvalue problem where pencils $W_1(\lambda,\mu)$ and $W_2(\lambda,\mu)$ are
both regular and characteristic polynomials $p_1$ and $p_2$ have a common nontrivial factor. For such problems we know
from Theorem \ref{thm:WtoDelta} that if $(\lambda_0,\mu_0)$ is an eigenvalue of (\ref{problem}) then $(\lambda_0,\mu_0)$ is also an eigenvalue of (\ref{drugi}). In the other direction, an eigenvalue of (\ref{drugi}) is just a candidate for an eigenvalue
of (\ref{problem}). We will show that a point $(\lambda_0,\mu_0)$ that belongs to a common factor of $p_1$ and $p_2$ can be an eigenvalue of (\ref{drugi}), but it is not clear if it is possible to extend Definition \ref{def:eigW} to include points on the
common factors as well.

\begin{example}\label{ex:common1}We take a two-parameter eigenvalue problem
$$W_1(\lambda,\mu)=\left[\begin{matrix}\lambda+\mu & \cr & \lambda-\mu\end{matrix}\right],\quad
  W_2(\lambda,\mu)=\left[\begin{matrix}\lambda+\mu-2 & \cr & \lambda-\mu\end{matrix}\right].
$$
The problem is clearly singular since $\lambda-\mu$ is a common factor of characteristic polynomials
$p_1$ and $p_2$. Matrices of the corresponding problem (\ref{drugi}) are
$$\Delta_0=\left[\begin{matrix}0 & & & \cr & -2 & &\cr & & 2 & \cr & & & 0\end{matrix}\right],\quad
\Delta_1=\left[\begin{matrix}-2 & & & \cr & 0 & &\cr & & 2 & \cr & & & 0\end{matrix}\right],\quad
\Delta_2=\left[\begin{matrix}2 & & & \cr & 0 & &\cr & & 2 & \cr & & & 0\end{matrix}\right].
$$
The canonical forms of $\Delta_1-\lambda \Delta_0$ and $\Delta_2-\mu \Delta_0$
are both equal to  $L_0\oplus L_0^T\oplus J_1(1)\oplus J_1(0)\oplus N_1$ and thus
have the same structure as predicted by Lemma \ref{lem:same_bundle}. The minimal reducing subspace, which is the same for
both pencils, is ${\rm Lin}(e_4)$.
Problem (\ref{drugi}) has two eigenvalues $(0,0)$ and $(1,1)$ with regular eigenvectors $e_2$ and $e_3$, respectively.

Both points $(0,0)$ and $(1,1)$ lie on the zero set of the common factor curve $\lambda-\mu=0$. If we look at a situation at the point $(0,0)$, we can see that the KCF of $W_1(0,0)-\gamma C_1$ and $W_2(0,0)-\gamma C_2$ is
$2J_1(0)$ and $J_1(-2)\oplus J_1(0)$ respectively. This gives $\dim(\ker(\Delta_1))=2$ by Theorem \ref{thm:Tomaz}. If we perturb pencils into $W_1(\varepsilon,0)-\gamma C_1$ and $W_2(\varepsilon,0)-\gamma C_2$, then the KCF changes to
$J_1(\varepsilon)\oplus J_1(-\varepsilon)$ and $J_1(-\varepsilon)\oplus J_1(-2+\varepsilon)$, respectively, which gives
$\dim(\ker(\Delta_1-\varepsilon\Delta_0))=1$. This means that $\lambda_0=0$ is an eigenvalue
of $\Delta_1-\lambda \Delta_0$. In a similar way we could show that $\mu_0=0$ is an eigenvalue
of $\Delta_2-\mu \Delta_0$ and then finally check that $e_2$ is a regular eigenvector for $(0,0)$.

Using the same approach we could show that $(1,1)$ is an eigenvalue of (\ref{drugi}). Both points $(0,0)$ and $(1,1)$ are intersections of a common curve and another curve from $p_1$ or $p_2$. This might look like a good condition that makes a point an eigenvalue of (\ref{drugi}), but the next example immediately  gives a counterexample for this assumption.
\end{example}

\begin{example}\label{ex:common2}We slightly change the first pencil from Example \ref{ex:common1} and use
$$W_1(\lambda,\mu)=\left[\begin{matrix}\lambda+\mu & 1 \cr & \lambda-\mu\end{matrix}\right],\quad
  W_2(\lambda,\mu)=\left[\begin{matrix}\lambda+\mu-2 & \cr & \lambda-\mu\end{matrix}\right].
$$
Point $(0,0)$ is still an intersection of the common curve $\lambda=\mu$ and curve $\lambda=-\mu$ from $W_1$, but now
 $(0,0)$ is not an eigenvalue of the corresponding problem (\ref{drugi}), whose matrices are
$$\Delta_0=\left[\begin{matrix}0 & & & \cr & -2 & &\cr & & 2 & \cr & & & 0\end{matrix}\right],\quad
\Delta_1=\left[\begin{matrix}-2 & & -1& \cr & 0 & & 1\cr & & 2 & \cr & & & 0\end{matrix}\right],\quad
\Delta_2=\left[\begin{matrix}2 & & 1 & \cr & 0 & & 1\cr & & 2 & \cr & & & 0\end{matrix}\right].
$$
The KCF of $\Delta_i-\lambda \Delta_0$ is $L_0\oplus L_1^T\oplus J_1(1)\oplus N_1$ for $i=1,2$.
If we look at a situation at $(0,0)$, we see that the KCF of $W_1(0,0)-\gamma C_1$ is
$J_2(0)$ and the KCF of $W_1(\varepsilon,0)-\gamma C_1$ is
$J_1(\varepsilon)\oplus J_1(-\varepsilon)$, while canonical forms related to $W_2$ are same as in Example \ref{ex:common1}.
We get $\dim(\ker(\Delta_1))=\dim(\ker(\Delta_1-\varepsilon\Delta_0))=1$, which confirms that
 $0$ is not an eigenvalue of $\Delta_1-\lambda \Delta_0$.
\end{example}

In the above two examples we were investigating an intersection point between the common factor and a curve belonging
to one of the pencils. The examples show that this alone is not enough for a point to be an eigenvalue of  (\ref{drugi}). In the following examples situation is even more complicated as $p_1=p_2$.

\begin{example}\label{ex:common3}We take
$$W_1(\lambda,\mu)=\left[\begin{matrix}\lambda+\mu & \lambda \cr & \lambda+\mu\end{matrix}\right],\quad
  W_2(\lambda,\mu)=\left[\begin{matrix}\lambda+\mu & \cr & \lambda+\mu\end{matrix}\right].
$$
Now $p_1(\lambda,\mu)=p_2(\lambda,\mu)=(\lambda+\mu)^2$. It turns out that
$(0,0)$ is a double eigenvalue of (\ref{drugi}), whose matrices are
$$\Delta_0=\left[\begin{matrix}0& & 1& \cr & 0 &  & 1\cr & & 0 & \cr & & & 0\end{matrix}\right],\quad
\Delta_1=0,\quad
\Delta_2=0.
$$
The KCF of $\Delta_i-\lambda \Delta_0$ is $2L_0\oplus 2L_0^T\oplus 2J_1(0)$ for both $i=1,2$. The minimal reducing subspace, which is the same for
both pencils, has dimension 2 and is spanned by $e_1$ and $e_2$, while
$e_3$ and $e_4$ are regular eigenvectors for the eigenvalue $(0,0)$.

If we look at a situation at $(0,0)$, we see that the KCF of $W_i(0,0)-\gamma C_i$ is
$2 J_1(0)$ for $i=1,2$. The perturbed pencils $W_i(\varepsilon,0)-\gamma C_i$ for $i=1,2$ have canonical forms
$J_2(\varepsilon)$ and $2J_1(\varepsilon)$, respectively. This shows that
$4=\dim(\ker(\Delta_1))>\dim(\ker(\Delta_1-\varepsilon \Delta_0))=2$ and confirms that $0$ is a double eigenvalue
of $\Delta_1-\lambda \Delta_0$. In a similar way we show that $0$ is a double eigenvalue of
$\Delta_2-\mu \Delta_0$. As $e_3$ and $e_4$ are regular eigenvectors for both pencils, $(0,0)$ is a double eigenvalue
of (\ref{drugi}).

If we look back to the pencils $W_1$ and $W_2$ from (\ref{problem}) and try to figure out, what makes point $(0,0)$ an eigenvalue,
we can observe that $0=\rank(W_1(0,0))<\nrank(W_1)=2$, $0=\rank(W_2(0,0))<\nrank(W_2)=2$, and
$\rank(W_1(0,0))+\rank(W_2(0,0))< \rank(W_1(\theta,\eta))+\rank(W_2(\theta,\eta))$ for all $(\theta,\eta)\ne (0,0)$. All other points $(\lambda_0,-\lambda_0)$, where $\lambda_0\ne 0$, on the common factor satisfy
$\rank(W_1(\lambda_0,-\lambda_0))<\nrank(W_1)$ and $\rank(W_2(\lambda_0,-\lambda_0))<\nrank(W_2)$, but fail to satisfy the condition
$\rank(W_1(\lambda_0,-\lambda_0))+\rank(W_2(\lambda_0,-\lambda_0))< \rank(W_1(\theta,\eta))+\rank(W_2(\theta,\eta))$ for all $(\theta,\eta)\ne (\lambda_0,\eta_0)$ close to $(\lambda_0,-\lambda_0)$. But, as we will see in the next example, this alone is not enough for a point to be an eigenvalue.
\end{example}

\begin{example}\label{ex:common4}We slightly change the second pencil from Example \ref{ex:common3} into
$$W_1(\lambda,\mu)=\left[\begin{matrix}\lambda+\mu & \lambda \cr & \lambda+\mu\end{matrix}\right],\quad
  W_2(\lambda,\mu)=\left[\begin{matrix}\lambda+\mu & 1 \cr & \lambda+\mu\end{matrix}\right].
$$
The result of the change is that (\ref{drugi}) has no eigenvalues.
The corresponding $\Delta$ matrices are
$$\Delta_0=\left[\begin{matrix}0& & 1& \cr & 0 &  & 1\cr & & 0 & \cr & & & 0\end{matrix}\right],\quad
\Delta_1=\left[\begin{matrix}0& 1& & \cr & 0 &  & \cr & & 0 & 1\cr & & & 0\end{matrix}\right],\quad
\Delta_2=\left[\begin{matrix}0& -1& & -1\cr & 0 &  & \cr & & 0 & -1 \cr & & & 0\end{matrix}\right].
$$
The KCF of $\Delta_i-\lambda \Delta_0$ is
$L_0\oplus L_1\oplus L_0^T\oplus L_1^T$ for $i=1,2$.

We can observe that $\rank(W_1(0,0))<\nrank(W_1)$, $\rank(W_2(0,0))<\nrank(W_2)$, and
$\rank(W_1(0,0))+\rank(W_2(0,0))< \rank(W_1(\theta,\eta))+\rank(W_2(\theta,\eta))$ for all $(\theta,\eta)\ne (0,0)$,
which makes a point $(0,0)$ special, but not special enough to be an eigenvalue.
\end{example}

\begin{example}\label{ex:common5}In the last example $0$ is an eigenvalue of $\Delta_i-\lambda \Delta_0$ for $i=1,2$, but there does not exist a common regular eigenvector that would make $(0,0)$ an eigenvalue of (\ref{drugi}). We take
\begin{equation*}
\begin{split}
W_1(\lambda,\mu)&=\left[\begin{matrix}\lambda + \mu & 1 & \lambda & \cr & \lambda+\mu & & \cr
& & \lambda+\mu & 1 \cr & & & \lambda+\mu\end{matrix}\right],\\
W_2(\lambda,\mu)&=\left[\begin{matrix}\lambda + \mu & 1 & & \cr & \lambda+\mu & & \cr
& & \lambda+\mu & 1 \cr & & & \lambda+\mu\end{matrix}\right].
\end{split}
\end{equation*}
The KCF of
$\Delta_i-\lambda \Delta_0$ (we leave the computation of the corresponding $\Delta$ matrices of size $16\times 16$ to the reader)
is $6L_0^T\oplus 2L_1\oplus 4L_0\oplus 2J_1(0)\oplus 6N_1$ for $i=1,2$.

To obtain the generic kernel and the minimal reducing subspace for the pencil $\Delta_1-\lambda \Delta_0$ we study a pair of matrix pencils
$W_i(\lambda,0)-\gamma C_i$ such that $\lambda\ne 0$ for $i=1,2$. The KCF of $W_1(\lambda,0)-\gamma C_1$ has a block
$J_3(\lambda)$ with a Kronecker chain $\lambda e_1,e_3,e_4$ and a block $J_1(\lambda)$ with eigenvector $-\lambda e_2+e_3$.
The KCF of $W_2(\lambda,0)-\gamma C_2$ has two blocks $J_2(\lambda)$ with chains $e_1,e_2$ and $e_3,e_4$. It follows from
Theorem \ref{thm:Tomaz} that
 the generic kernel of $\Delta_1-\lambda \Delta_0$ has dimension 6 and
${\rm GKer}(\Delta_1-\lambda \Delta_0)={\rm Lin}(e_1\otimes e_1,e_1\otimes e_3,e_1\otimes e_2+e_2\otimes e_1,
e_1\otimes e_4+e_2\otimes e_3, \lambda e_1\otimes e_2 + e_3\otimes e_1,
\lambda e_1\otimes  e_4+e_3\otimes e_3)$.

In a similar way we get that
 the generic kernel of $\Delta_2-\mu \Delta_0$ has dimension 6 as well and
${\rm GKer}(\Delta_2-\mu \Delta_0)={\rm Lin}(e_1\otimes e_1,e_1\otimes e_3,e_1\otimes e_2+e_2\otimes e_1,
e_1\otimes e_4+e_2\otimes e_3, -\mu e_1\otimes e_2 + e_3\otimes e_1,
-\mu e_1\otimes  e_4+e_3\otimes e_3)$.
This shows that both pencils have the same minimal reducing subspace.
${\cal R}(\Delta_1,\Delta_0)={\cal R}(\Delta_2,\Delta_0)={\rm Lin}(e_1\otimes e_1,e_1\otimes e_2, e_1\otimes e_3,e_1\otimes e_4, e_2\otimes e_1,e_2\otimes e_3,e_3\otimes e_1, e_3\otimes e_3).$

In point $\lambda=0$ and $\mu=0$ both pencils have the same generic kernel
${\cal G}:={\rm Lin}(e_1\otimes e_1,e_1\otimes e_3,e_1\otimes e_2+e_2\otimes e_1,
e_1\otimes e_4+e_2\otimes e_3, e_3\otimes e_1,
e_3\otimes e_3)$. But, although nullspaces $\ker(\Delta_1)$ and $\ker(\Delta_2)$ both have dimension $8$,
they intersect in the common generic kernel and do
not have a common regular vector.
 A detailed analysis shows that
$\ker(\Delta_1)={\cal G}\oplus{\rm Lin}(e_3\otimes e_2 + e_4\otimes e_1,e_3\otimes e_4 + e_4\otimes e_3)$ and
$\ker(\Delta_2)={\cal G}\oplus{\rm Lin}(-e_1+e_3)\otimes e_2 + e_4\otimes e_1,(-e_1+e_3)\otimes e_4 + e_4\otimes e_3)$,
therefore $\ker(\Delta_1)\cap \ker(\Delta_2)={\cal G}$ and $(0,0)$ is not an eigenvalue of (\ref{drugi}).
\end{example}

\section{Conclusion}\label{sec::conc}
We extend the equivalence of eigenvalues of a two-parameter eigenvalue problem
(\ref{problem}) and the eigenvalues of the associated pair of generalized eigenvalue problem
(\ref{drugi}) to all singular problems where pencils $W_1(\lambda,\mu)$ and $W_2(\lambda,\mu)$ are regular
and $\det(W_1(\lambda,\mu))$ and $\det(W_2(\lambda,\mu))$ do not have a nontrivial common factor. This was first established for nonsingular problems by Atkinson in \cite{Atkinson} and then generalized in \cite{MP2} for singular problems
such that all eigenvalues are algebraically simple.

For the case where $\det(W_1(\lambda,\mu))$ and $\det(W_2(\lambda,\mu))$ do have a nontrivial common factor,
we show the implication in one direction, which makes the eigenvalues of (\ref{drugi}) candidates for the
eigenvalues of (\ref{problem}). We give several examples in Section \ref{sec::common} and, while the eigenvalues of (\ref{drugi}) are well defined regardless to the presence of common factors, it is still open how to extend the definition of eigenvalues of (\ref{problem}) to cover the points from the common factors so that they would agree to the eigenvalues of (\ref{drugi}).

Based on numerical experiments we believe that is should be possible to generalize
Theorems \ref{thm:WtoDelta} and \ref{thm:DeltatoW} to singular multiparameter eigenvalue problems with more than two parameters as well. However,
the proof would have to be based on a different approach as Theorem \ref{thm:Tomaz}, which is our key tool, cannot be generalized to three or more parameters in an elegant way.

\section*{Acknowledgements}
Both authors have been supported by the Research Agency of the Republic of Slovenia, Research Grant P1-0154.


\end{document}